\theoremstyle{plain}
\newtheorem{proposition}{Proposition}
\newtheorem{lemma}{Lemma}
\newtheorem{assumption}{Assumption}
\newtheorem{theorem}{Theorem}
\newtheorem{definition}{Definition}
\theoremstyle{definition}
\newtheorem*{remark}{Remark}
\newcommand{\argmax}{\operatornamewithlimits{argmax}}
\begin{document}
%
\title{Optimality of Myopic Policy for Restless Multiarmed Bandit with Imperfect Observation}
%
%
%
%

\author{
\IEEEauthorblockN{Kehao~Wang}
\IEEEauthorblockA{Wuhan University of Technology\\
 Hubei, P.R.C.\\
Email: kehao.wang@whut.edu.cn}
}

\IEEEcompsoctitleabstractindextext{%
\begin{abstract}
We consider the scheduling problem concerning $N$ projects. Each project evolves as a multi-state Markov process. At each time instant, one project is scheduled to work, and some reward depending on the state of the chosen project is obtained. The objective is to design a scheduling policy that maximizes the expected accumulated discounted reward over a finite or infinite horizon. The considered problem can be cast into a restless multi-armed bandit (RMAB) problem that is of fundamental importance in decision theory. It is well-known that solving the RMAB problem is PSPACE-hard, with the optimal policy usually intractable due to the exponential computation complexity. A natural alternative is to consider the easily implementable myopic policy that maximizes the immediate reward. In this paper, we perform an analytical study on the considered RMAB problem, and establish a set of closed-form conditions to guarantee the optimality of the myopic policy.
\end{abstract}

\begin{IEEEkeywords}
Restless bandit, myopic policy, optimality, stochastic order, scheduling
\end{IEEEkeywords}}

\maketitle

\IEEEdisplaynotcompsoctitleabstractindextext

%
\IEEEpeerreviewmaketitle

\section{Introduction}
\label{section:introduction}

Consider a scheduling system composed of $N$ independent projects each of which is models as a $X$-state Markov chain with known matrix of transition probabilities. At each time period one project is scheduled to work and a reward depending on the states of the worked project is obtained. The objective is to design a scheduling policy that maximizing the expected accumulated discounted reward (respectively, the expected accumulated reward) collected over a finite (respectively, infinite) time horizon. Mathematically, the considered channel access problem can be cast into the restless multi-armed bandit (RMAB) problem of fundamental importance in decision theory~\cite{Whittle88}.
RMAB problems arise in many areas, such as wired and wireless communication systems, manufacturing systems, economic systems, statistics, biomedical engineering, and information systems etc.~\cite{Whittle88,Gittins2011}. However, the RMAB problem is proved to be PSPACE-Hard~\cite{Papadimitriou99}.

The considered problem can also be formulated as a multi-state Partially Observed Markov Decision Process (POMDP)~\cite{Zhao07}. The challenges of multistate POMDPs are twofold: First, the probability vector is not completely ordered in the probability space, making the structural analysis substantially more difficult; Second, multistate POMDPs tend to encounter the ``curse of dimensionality", which is further complicated by the uncountably infinite probability space. Hence, numerical methods are adopted popularly. However, the numerical approach does not provide any meaningful insight into optimal policy. Moreover, this numerical approach has huge computational complexity. For the two reasons, we study some instances of the generic RMAB in which the optimal policy has a simple structure. Specially, we develop some sufficient conditions to guarantee the optimality of the myopic policy; that is, the optimal policy is to access the best channels each time in the sense of monotonic likelihood ratio order.

In the classic RMAB problem, a player chooses $M$ out of $N$ arms, each evolving as a Markov chain, to activate each time,  and receives a reward determined by the states of the activated arms. The objective is to maximize the long-run reward over an infinite horizon by choosing which $N$ arms to activate each time. If only the activated arms change their states, the problem is degenerated to the multi-armed bandit (MAB) problem \cite{Gittins74}.
The MAB problem is solved by Gittins by showing that the optimal policy has an index structure \cite{Gittins74, Gittins79}.

There exist two major thrusts in the research of the RMAB problem.
Since the optimality of myopic policy is not generally guaranteed, the first research thrust is to analyze the performance difference between optimal policy and approximation policy~\cite{Guha07,Guha09,Bertsimas00}. Specifically, a simple myopic policy, also called greedy policy, is developed in~\cite{Guha07} which yields a factor $2$ approximation of the optimal policy for a subclass of scenarios referred to as \emph{Monotone MAB}. The second thrust is to establish sufficient conditions to guarantee the optimality of the myopic policy in some specific instances of restless bandit scenarios, particularly in the context of opportunistic communications~\cite{Qzhao08,Sahmand09,Ahmad09,Kliu10,Wang11TPS,Fabio11icc,Wang13TVT,Wang13JSTSP}.

For the case of \emph{two-state}, Zhao~\emph{et al.}~\cite{Qzhao08} established the structure of the myopic policy, and partly obtained the optimality for the case of i.i.d. channels. Then Ahmad and Liu~\emph{et al.}~\cite{Ahmad09b} derived the optimality of the myopic sensing policy for the positively correlated i.i.d. channels for accessing one channel (i.e., $k=1$) each time, and further extended the optimality to access multiple i.i.d. channels ($k>1$)~\cite{Ahmad09}.
From another point, in~\cite{Wang11TPS}, we extended i.i.d. channels~\cite{Ahmad09b} to non i.i.d. ones, and focused on a class of so-called \emph{regular} functions, and derived closed-form sufficient conditions to guarantee the optimality of myopic sensing policy.
The authors~\cite{Fabio11icc} studied the myopic channel probing policy for the similar scenario proposed, but only established its optimality in the particular case of probing one channel ($M=1$) each time. In our previous work~\cite{Wang13TVT}, we established the optimality of myopic policy for the case of probing $N-1$ of $N$ channels each time and analyzed the performance of the myopic probing policy by domination theory, and further in~\cite{Wang13JSTSP} studied the generic case of arbitrary $M$ and derived more strong conditions on the optimality by dropping one of the non-trivial conditions of~\cite{Fabio11icc}.

For the complicated case of \emph{multi-state}, the authors in~\cite{ouyang14} established the sufficient conditions for the optimality of myopic sensing policy in multi-state homogeneous channels with a set of non-trivial assumptions.

\subsection{Contribution of the Paper}
The main results of this paper are the optimality conditions for expected accumulated discounted reward in Theorem~\ref{theorem:optimal_condition_heter_pos_case} and Theorem~\ref{theorem:optimal_condition_heter_neg_case} for imperfect observation, which makes it different from the most relevant paper~\cite{ouyang14} with perfect observation. The major difficulties encountered in optimizing the rewards in multi-state channel are: 1) how to obtain a non-trivial upper bound for multiple different stochastic matrices under multivariate reward (corresponding to multi-state) case; 2) how to determine the stochastic order of belief vectors; 3) identify the number of branches in the decision tree determined by a specific policy corresponding to the auxiliary value function defined in this paper.
These issues are resolved by 1) assuming that each transmission matrix has a non-trivial eigenvalue with $X-1$ times, under which the first-order stochastic dominance is preserved and meanwhile, the upper bound of each matrix is characterized by the eigenvalue; 2) assuming that there exists a determined stochastic dominance order of transmission matrices at any time instance; 3) considering the performance difference of two specific policies which differ in only one element of belief vectors; that is, the two policies have the form of \textit{difference}, mathematically. Further, we obtain the number of branches needed to be fix their bounds.

In this paper, we considered the problem of indirect observation of project states which makes our scheduling problem is different from~\cite{ouyang14} to a large extent.
In particular, the contributions of this paper include:
\begin{itemize}
  \item The structure of the myopic policy is shown to be a simple queue determined by the information states of projects provided that certain conditions are satisfied for the transition matrix of multi-state projects.
   \item We establish a set of conditions under which the myopic policy is proved to be optimal.
   \item Our derivation demonstrates the advantage of branch-and-bound and the directed comparison based optimization approach. The results of this paper are a generic contribution to the state of the art of the theory of restless bandit problems, although the structure of the optimal policy of generic restless bandit is not known.
\end{itemize}

\subsection{Organization}
The rest of the paper is organized as follows. In Section~\ref{section:Problem_formulation}, we present the system model and the formulation of the optimization problem. In Section~\ref{sec:opt}, we construct a set of conditions to guarantee the optimality of myopic policy by deriving some properties of transmission matrix and some bounds of serval pairs of policies. In Section~\ref{sec:ext}, the optimality results are extended to two different cases. Finally, we conclude in Section~\ref{section:conclusion}.

\section{Problem Formulation}
\label{section:Problem_formulation}

Consider $N$ independent projects $n=1,\cdots,N$. Assume each project $n$ has a finite number, $X$, of states, denoted as $\mathcal{X}$. Let $s_t^{(n)}$ denote the state of project $n$ at discrete time $t=1,2,\cdots$. At each time instant $t$, only one of these projects can be worked on. If project $n$ is worked on at time $t$, an instantaneous reward $\beta^t R(s_t^{(n)},n)$ is accrued ($ R(s_t^{(n)},n)$ is assumed finite). Here, $0\leq \beta \leq 1$ denotes the discount factor; the state $s_t^{(n)}$ evolves according to an $X$-state homogeneous Markov chain with transition probability matrix
$A=(a_{ij})_{i,j\in \mathcal{X}}$, where, $$a_{ij}=P(s^{(n)}_{t+1}=j | s^{(n)}_{t}=i) \text{  if project $n$ is worked on at $t$}.$$
All projects are initialized with $s^{(n)}_0 \sim x^{(n)}_0$, where $x^{(n)}_0$ are specified initial distributions for $n=1,\cdots, N$.

The state of the active project $n$ is indirectly observed via noisy measurements (observations) $y^{(n)}_{t+1}$ of the active project state $s^{(n)}_{t+1}$. Assume that these observations $y^{(n)}_{t+1}$ belong to a finite set $\mathcal{Y}$ indexed by $m=1,\cdots,\mathcal{Y}$. Let $B=(b_{im})_{i\in \mathcal{X}, j\in \mathcal{Y}}$ denote the observation probability matrix of the HMM, where each element $b_{im}\triangleq P(y^{(n)}_{t+1}=m | y^{(n)}_{t}=i, u_t=n)$.

Let $u_t\in \{1,\cdots,N\}$ denote which project is worked on at time $t$. Consequently, $s^{(u_t)}_{t+1}$ denotes the  state of the active project at time $t+1$. Denote the observation history at time $t$ as $Y_t=(y^{(u_0)}_1, \cdots, y^{(u_{t-1})}_t)$ and let $U_t=(u_0,\cdots,u_t)$. Then the project at time $t+1$ is chosen according to $u_{t+1}=\mu(Y_{t+1},U_t)$, where the policy denoted as $\mu$ belongs to the class of stationary policies $\mathcal{U}$. The total expected discounted reward over an infinite-time horizon is given by
\begin{equation}\label{ob:obj}
    J_{\mu}=\mathbb{E}\Big[\sum^{\infty}_{t=0}\beta^t R(s^{(u_t)}_{t},u_t)\Big], ~~~~u_t=\mu(Y_t,U_{t-1}),
\end{equation}
where $\mathbb{E}$ denotes mathematical expectation. The aim is to determine the optimal stationary policy $\mu^{\ast}=\argmax_{\mu\in \mathcal{U}} J_{\mu}$, which yields the maximum rewards in~\eqref{ob:obj}.

\subsection{Information state}
The above partially observed multiarmed bandit problem can be re-expressed as a fully observed multiarmed bandit in terms of the information state. For each project $n$, denoted by $x^{(n)}_t$ the information state at time $t$ (Bayesian posterior distribution of $s^{(n)}_t$) as $x^{(n)}_t=(x^{(n)}_{t}(i))~~i=1,\cdots,X$, where $x^{(n)}_{t}(i)\triangleq P(s^{(n)}_{t}=i|Y_t,U_{t-1})$.
The HMM multiarmed bandit problem can be viewed as the following scheduling problem: Consider $N$ parallel HMM state estimation filters, one for each project. The project $n$ is active, an observation $y^{(n)}_{t+1}$ is obtained and the information state $x^{(n)}_{t+1}$ is computed recursively by the HMM state filter according to
\begin{equation*}
    x^{(n)}_{t+1}=T(x^{(n)}_t,y^{(n)}_{t+1}), \text{ if project $n$ is worked on at time $t$},
\end{equation*}
where
\begin{align}\label{eq:pr_tran_o}
    &T(x^{(n)},y^{(n)})\triangleq \frac{B(y^{(n)})A'x^{(n)}}{d(x^{(n)},y^{(n)})}, \\
    &d(x^{(n)},y^{(n)})\triangleq \mathbf{1}'_{X}B(y^{(n)})A'x^{(n)}\nonumber .
\end{align}
In~\eqref{eq:pr_tran_o}, if $y^{(n)}=m$, then $B(m)=diag[b_{1m},\cdots,b_{X m}]$ is the diagonal matrix formed by the $m$th column of the observation matrix $B$, $A_x$ is the $x$th row of the matrix $A$, and $\mathbf{1}_{X}$ is an $X$-dimensional column vector of ones.

The state estimation of the other $N-1$ projects is according to
\begin{equation}\label{eq:pr_tran}
     x^{(n)}_{t+1}=A'x^{(n)}_t,
\end{equation}
if project $l$ is not worked on at time $t$, $l\in\{1,\cdots,N\},~~l\neq n$.

Let $\Pi(X)$ denote the state space of information states $x^{(n)},~n\in\{1,2,\cdots,N\}$, which is a $X-1$-dimensional simplex:
\begin{equation*}
   \Pi(X)=\Big\{ x \in\mathbb{R}^{X}: \mathbf{1}'_{X} x=1, 0\leq  x(i) \leq1 \text{ for all } i\in \mathcal{X} \Big\}.
\end{equation*}

The process $x^{(n)}_t,~n=1,\cdots,N$, qualifies as an information state since choosing $u_{t+1}=\mu(Y_{t+1},U_t)$ is equivalent to choosing $u_{t+1}=\mu(x^{(1)}_{t+1},\cdots,x^{(N)}_{t+1})$. Using the smoothing property of conditional expectations, the reward function \eqref{ob:obj} can be rewritten in terms of the information state as
\begin{equation*}
    J_{\mu}=\mathbb{E}\Big[\sum^{\infty}_{t=0}\beta^t R'(u_t)x^{(u_t)}_t\Big], ~~~~u_t=\mu(x^{(1)}_t,\cdots,x^{(N)}_t),
\end{equation*}
where $R'(u_t)$ denotes the $X$ dimensional reward column vector$[R(s^{(n)}_{t}=1,u_t),\cdots,R(s^{(n)}_{t}=X,u_t)]$. The aim is to compute the optimal policy $\argmax_{\mu\in \mathcal{U}} J_{\mu}$.

To get more insight on the structure of the optimization problem formulated in~\eqref{ob:dp}, we derive its dynamic programming formulation as follows:
\begin{align}\label{ob:dp}
\begin{cases}
V_{T}(x^{(1:N)}_T)=\max_{\substack{u_T}} \mathbb{E}\big[R'(u_T)x^{(u_T)}_T\big],  \\
V_{t}(x^{(1:N)}_t) = \max_{\substack{u_t}} \mathbb{E}\Big[ R'(u_t) x^{(u_t)}_t\\
\quad  + \beta  \sum_{m\in\mathcal{Y}} d(x^{(u_t)}_t,m) V_{t+1}(x^{(1:u_t-1)}_{t+1},x^{(u_t)}_{t+1,m},x^{(u_t+1:N)}_{t+1}) \Big],
\end{cases}
\end{align}
where, $x^{(i:j)}_t\triangleq \big(x^{(i)}_t,x^{(i+1)}_t,\cdots,x^{(j)}_t\big)$, and
\begin{equation}\label{ob:dp_pr}
\begin{cases}
x^{(u_t)}_{t+1,m} = T(x^{(u_t)}_t,m )\\
x^{(n)}_{t+1} = A'x^{(n)}_t,~~n \neq u_t.
\end{cases}
\end{equation}

\subsection{Myopic Policy}
Theoretically, the optimal policy can be obtained by solving the above dynamic programming. It is infeasible, however, due to the impact of the current action on the future reward, and in fact obtaining the optimal solution directly from the above recursive equations is computationally prohibitive. Hence, a natural alternative is to seek a simple myopic policy maximizing the immediate reward while ignoring the impact of the current action on the future reward, which is easy to compute and implement, formally defined as follows:
\begin{equation}
\hat{u}(t)=\argmax_{n} R'x^{(n)}_t .
\label{eq:myopic_policy}
\end{equation}


For the purpose of tractable analysis, we introduce some partial orders used in the following sections.

\begin{definition}[MLR ordering, ~\cite{Muller2002}]
Let $x_1$, $x_2\in \Pi(X)$ be any two belief vectors. Then $x_1$ is greater than $x_2$ with respect to the MLR ordering---denoted as  $x_1 ~{\geq}_r~ x_2$, if
\begin{equation*}
    x_{1}(i)x_{2}(j) \leq x_{2}(i)x_{1}(j), ~~~~i>j,~~i,j\in\{1,2,\cdots,X\}.
\end{equation*}
\end{definition}

\begin{definition}[first order stochastic dominance, ~\cite{Muller2002}]
Let $x_1$, $x_2\in \Pi(X)$, then $x_1$ first order stochastically dominates $x_2$---denoted as  $x_1~ {\geq}_s ~x_2$, if the following exists for $j=1,2,\cdots,X$,
\begin{equation*}
    \sum^X_{i=j}x_{1}(i) \geq \sum^X_{i=j}x_{2}(i).
\end{equation*}
\end{definition}

Some useful results~\cite{Muller2002} are stated here:
\begin{proposition}[\cite{Muller2002}]\label{prop:result}
Let $\mathbf{w}_1$, $\mathbf{w}_2\in \Pi(X)$, the following holds
\begin{enumerate}
  \item $\mathbf{w}_1 {\geq}_r\mathbf{w}_2$ implies $\mathbf{w}_1 {\geq}_s\mathbf{w}_2$.
  \item Let $\mathcal{V}$ denote the set of all $X$ dimensional vectors $v$ with nondecreasing components, i.e., $v_1\leq v_2\leq \cdots \leq v_X$. Then $\mathbf{w}_1 {\geq}_s\mathbf{w}_2$ iff for all $v\in \mathcal{V}$, $v' \mathbf{w}_1 \geq v' \mathbf{w}_2 $.
\end{enumerate}
\end{proposition}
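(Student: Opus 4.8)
The plan is to prove the two parts separately, reducing both to the monotone-likelihood-ratio structure together with an elementary rearrangement, so that nothing beyond Definitions~1 and~2 is needed.

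For part (1), I would first restate the MLR hypothesis $\mathbf{w}_1 {\geq}_r \mathbf{w}_2$ in ratio form: the inequalities $w_1(i)w_2(j)\le w_2(i)w_1(j)$ for $i>j$ say precisely that the likelihood ratio $w_1(i)/w_2(i)$ is nondecreasing in $i$. From this I would extract a single-crossing property of the difference $g(i)\triangleq w_1(i)-w_2(i)$: since $g(i)=w_2(i)\big(w_1(i)/w_2(i)-1\big)$ and the ratio increases in $i$, the sign of $g$ switches at most once, from nonpositive to nonnegative. Hence there is a threshold $k$ with $g(i)\le 0$ for $i<k$ and $g(i)\ge 0$ for $i\ge k$. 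Using that both vectors are distributions, so $\sum_{i=1}^X g(i)=0$, I would then verify the tail inequality $\sum_{i=j}^X g(i)\ge 0$ for every $j$ by cases: when $j>k$ the tail is a sum of nonnegative terms, and when $j\le k$ I would rewrite $\sum_{i=j}^X g(i)=-\sum_{i<j} g(i)$ and note that the removed terms are all nonpositive. In either case the tail sum is nonnegative, which is exactly the definition of $\mathbf{w}_1 {\geq}_s \mathbf{w}_2$.

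For part (2), the reverse implication is immediate: testing against the nondecreasing indicator vector $v$ that is $0$ in positions $1,\dots,j-1$ and $1$ in positions $j,\dots,X$ gives $v'\mathbf{w}=\sum_{i=j}^X w(i)$, so the hypothesis $v'\mathbf{w}_1\ge v'\mathbf{w}_2$ recovers the FOSD tail inequalities one index at a time. For the forward implication I would use summation by parts: writing an arbitrary $v\in\mathcal{V}$ as $v_i=v_1+\sum_{l=2}^i \Delta_l$ with nonnegative increments $\Delta_l=v_l-v_{l-1}$ and interchanging the order of summation yields $v'\mathbf{w}=v_1+\sum_{l=2}^X \Delta_l\sum_{i=l}^X w(i)$, using $\sum_i w(i)=1$. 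Subtracting the expressions for $\mathbf{w}_1$ and $\mathbf{w}_2$ leaves $v'\mathbf{w}_1-v'\mathbf{w}_2=\sum_{l=2}^X \Delta_l\sum_{i=l}^X \big(w_1(i)-w_2(i)\big)$, and each summand is nonnegative because $\Delta_l\ge 0$ and the tail differences are nonnegative by $\mathbf{w}_1{\geq}_s\mathbf{w}_2$.

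All the steps are elementary; the only delicate point is the single-crossing argument in part (1), where one must treat indices with $w_2(i)=0$ (interpreting the ratio as $+\infty$) and confirm that the sign pattern of $g$ remains monotone across such points. I expect this degenerate-support bookkeeping, rather than any conceptual difficulty, to be the main obstacle.
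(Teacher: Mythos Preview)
Your argument is correct, but there is no comparison to make: the paper does not prove this proposition at all. It is stated with the attribution \cite{Muller2002} and used as a black box; none of the appendices address it. So your proposal supplies a self-contained proof where the paper simply defers to the literature.

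The proof itself is the standard one and is sound. In part~(1), your single-crossing argument for $g(i)=w_1(i)-w_2(i)$ combined with $\sum_i g(i)=0$ is exactly the textbook route from MLR to first-order stochastic dominance. In part~(2), the Abel summation (writing $v$ in terms of its nonnegative increments $\Delta_l$ and collapsing to a nonnegative combination of tail differences) is the classical characterization of $\geq_s$ via increasing test functions, and your choice of indicator vectors for the converse is the right one. The only point you flag yourself---handling indices with $w_2(i)=0$---is indeed just bookkeeping: when $w_2(i)=0$ the MLR inequality $w_1(i)w_2(j)\le w_2(i)w_1(j)$ forces $w_1(j)=0$ or $w_2(j)=0$ for all $j<i$ with $w_1(i)>0$, which preserves the ``nonpositive then nonnegative'' sign pattern of $g$. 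Nothing conceptual is missing.
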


\begin{definition}[Myopic Policy]
The myopic policy $\hat{u}:=(\hat{u}_0,\hat{u}_1,\cdots,\hat{u}_T)$ is the policy that selects the best project (in the sense of MLR) at each time. That is, if $x^{(\sigma_1)}_t {\geq}_s \cdots {\geq}_s x^{(\sigma_N)}_t $, then the myopic policy at $t$ is
\begin{align*}
   \hat{u}_t=\mu_t(x^{(1)}_t,\cdots,x^{(N)}_t)=\sigma_1.
\end{align*}
\end{definition}

\section{Optimality}
\label{sec:opt}

To analyze the performance of the myopic policy, we first introduce an auxiliary value function and then prove a critical feature of the auxiliary value function. Next, we give a simple assumption about transmission matrix, and show its special stochastic order. Finally, by deriving the bounds of different policies, we get some important bounds, which serves as the basis to prove the optimality of the myopic policy.

\subsection{Value Function and its Properties}
First, we define the auxiliary value function (AVF) as follows:
\begin{small}
\begin{equation}
\label{Eq:vf}
\begin{cases}
W^{\hat{u}}_{T}(x^{(1:N)}_T) = R'(u_T)x^{(u_T)}_T, \\
W^{\hat{u}}_{\tau}(x^{(1:N)}_{\tau}) = R'(\hat{u}_{\tau})x^{(\hat{u}_{\tau})}_T\\
\quad+\beta \underbrace{  \sum_{m\in\mathcal{Y}} d(x^{(\hat{u}_{\tau})}_{\tau},m ) W^{\hat{u}}_{\tau+1}(x^{(1:\hat{u}_{\tau}-1)}_{\tau+1},x^{(\hat{u}_{\tau})}_{\tau+1,m},x^{(\hat{u}_{\tau}+1:N)}_{\tau+1})}_{\digamma(x^{(1:N)}_{\tau},\hat{u}_{\tau})},~~t+1 \leq \tau \leq T\\
W^{u}_{t}(x^{(1:N)}_t) =  R'(u_t)x^{(u_t)}_t\\
\quad+\beta \underbrace{\sum_{m\in\mathcal{Y}} d(x^{(u_t)}_t,m) W^{\hat{u}}_{t+1}(x^{(1:u_t-1)}_{t+1},x^{(u_t)}_{t+1,m},x^{(u_t+1:N)}_{t+1})}_{\digamma(x^{(1:N)}_t,u_t)},
\end{cases}
\end{equation}
\end{small}

\begin{remark} AVF is the reward under the policy: at slot $t$, $u_t$ is adopted, while after $t$, myopic policy $\hat{u}_{\tau}$ ($t+1 \leq \tau \leq T$) is adopted.
\end{remark}

Let $e_i$ be an $X$-dimensional column vector with 1 in the $i$-th element and 0 in others, and $E$ be the $X\times X$ unit matrix.
\begin{lemma}
\label{lemma:decomposability_sym_ARQ}
$W^{u}_{t}(x^{(1:N)}_t)$ is decomposable for all $t=0,1, \cdots, T$, i.e.,
\begin{align*}
&W^{u}_{t}(x^{(1:n-1)}_t,x^{(n)}_t,x^{(n+1:N)}_t)\\
 &= \sum^X_{i=1}     x^{(n)}_t(i) W^{u}_{t}(x^{(1:n-1)}_t,e_i,x^{(n+1:N)}_t)\\
 &= \sum^X_{i=1}e'_i x^{(n)}_t    W^{u}_{t}(x^{(1:n-1)}_t,e_i,x^{(n+1:N)}_t)
\end{align*}
\end{lemma}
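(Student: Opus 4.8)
The plan is to prove the lemma by backward induction on $t$, after first recording the right way to read ``decomposability'': it is exactly the statement that $W^u_t$ is the restriction to the product of simplices of a map that is \emph{linear in each of its $N$ belief-vector arguments}. Indeed, writing $x^{(n)}_t=\sum_{i=1}^X x^{(n)}_t(i)\,e_i$ and using $\mathbf 1'_X x^{(n)}_t=1$, linearity in the $n$-th slot is equivalent to the desired identity $W^u_t(\cdots,x^{(n)}_t,\cdots)=\sum_i x^{(n)}_t(i)\,W^u_t(\cdots,e_i,\cdots)$. So it suffices to show that, for each fixed $n$ with all other slots held fixed, $W^u_t$ depends linearly on $x^{(n)}_t$.

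For the base case $t=T$, I would note that $W^u_T=R'(u_T)x^{(u_T)}_T$ is a plain inner product, hence linear in the $u_T$-th slot, and independent of $x^{(n)}_T$ for $n\neq u_T$; in the latter case $\sum_i x^{(n)}_T(i)\,W^u_T(\cdots,e_i,\cdots)=W^u_T\sum_i x^{(n)}_T(i)=W^u_T$ because $\mathbf 1'_X x^{(n)}_T=1$. For the inductive step I would assume the claim at time $t+1$, in particular that $W^{\hat u}_{t+1}$ is decomposable (linear) in each argument, and then fix $n$ and vary only $x^{(n)}_t$, splitting into two sub-cases according to the filter updates in \eqref{eq:pr_tran_o} and \eqref{ob:dp_pr}. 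When $n\neq u_t$, the reward term $R'(u_t)x^{(u_t)}_t$ and every weight $d(x^{(u_t)}_t,m)$ are independent of $x^{(n)}_t$, and the only moving future argument is $x^{(n)}_{t+1}=A'x^{(n)}_t\in\Pi(X)$; the induction hypothesis applied in that slot gives $W^{\hat u}_{t+1}(\cdots,A'x^{(n)}_t,\cdots)=\sum_i[A'x^{(n)}_t]_i\,W^{\hat u}_{t+1}(\cdots,e_i,\cdots)$, which is linear in $x^{(n)}_t$ since $[A'x^{(n)}_t]_i$ is.

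The case $n=u_t$ is where the only real subtlety lies, and I expect it to be the main obstacle: here the $u_t$-th future argument is the \emph{normalized} Bayesian update $T(x^{(u_t)}_t,m)=B(m)A'x^{(u_t)}_t/d(x^{(u_t)}_t,m)$, which is a nonlinear (ratio) function of $x^{(u_t)}_t$. The key observation is that this nonlinearity is cancelled by the weighting factor $d(x^{(u_t)}_t,m)$ appearing in $\digamma$: applying the induction hypothesis in the $u_t$-th slot,
\begin{align*}
&d(x^{(u_t)}_t,m)\, W^{\hat u}_{t+1}\!\big(\cdots,T(x^{(u_t)}_t,m),\cdots\big)\\
&\quad = \sum_{i=1}^X \big[B(m)A'x^{(u_t)}_t\big]_i\, W^{\hat u}_{t+1}(\cdots,e_i,\cdots),
\end{align*}
where the factor $d(x^{(u_t)}_t,m)$ has exactly cancelled the denominator inside each coordinate $[T(x^{(u_t)}_t,m)]_i$. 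The right-hand side is linear in $x^{(u_t)}_t$ because $[B(m)A'x^{(u_t)}_t]_i$ is, and summing over $m\in\mathcal Y$ and adding the linear reward term $R'(u_t)x^{(u_t)}_t$ preserves linearity.

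Combining the two sub-cases closes the induction, and the equivalence recorded at the outset then yields the stated decomposition for every $t=0,1,\dots,T$. Apart from the denominator cancellation, everything reduces to bookkeeping about which belief vector each term depends on; it is precisely because the unnormalized filter output $B(m)A'x^{(u_t)}_t$ enters the dynamic program weighted by $d(x^{(u_t)}_t,m)$ that the normalization-induced nonlinearity vanishes and an honestly linear dependence on the belief survives.
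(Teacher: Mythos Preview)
Your proposal is correct and follows essentially the same route as the paper's proof: backward induction on $t$, splitting the inductive step into the two cases $u_t=n$ and $u_t\neq n$, and in the former exploiting the cancellation $d(x^{(u_t)}_t,m)\,[T(x^{(u_t)}_t,m)]_i=[B(m)A'x^{(u_t)}_t]_i$ to turn the normalized Bayesian update into a linear expression in $x^{(u_t)}_t$. Your framing in terms of multilinearity is slightly cleaner than the paper's side-by-side expansion of both sides, but the substance is identical.
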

\begin{proof}
Please refer to Appendix A.
\end{proof}

\subsection{Assumptions}
We make the following assumptions/conditions.
\begin{assumption}\label{assp:1}
Assume that
\begin{itemize}
  \item[1)] $A_1 ~{\leq}_r ~A_2 {\leq}_r~ \cdots~ {\leq}_r~ A_X$.
  \item[2)] $B(1) ~{\leq}_r ~B(2) {\leq}_r~ \cdots~ {\leq}_r~ B(Y)$.
  \item[3)] There exists some $K$ ($2\leq K \leq Y$) such that
  \begin{align*}
    &T(A'e_1,K)~{\geq}_r ~(A')^2 e_1,\\
    &T(A'e_X,K-1)~{\leq}_r~ (A')^2 e_1.
  \end{align*}
  \item[4)] $A_1~{\leq}_r~x^{(1)}_0~{\leq}_r~x^{(2)}_0~{\leq}_r~\cdots~{\leq}_r~x^{(N)}_0~{\leq}_r~A_X$.
  \item[5)] $R' ( e_{i+1}- e_i)\geq R' Q' ( e_{i+1}- e_i)$ ($1\leq i \leq X-1$), where $A= V \Lambda V^{-1} $, $Q = V \Upsilon V^{-1} $,
 \begin{align*}
   & \Lambda= \left(
    \begin{array}{cccc}
      1 & 0 & \hdots & 0 \\
      0 & \lambda_2 & \hdots & 0 \\
      \vdots & \vdots & \ddots & \vdots\\
      0 & 0 & \hdots & \lambda_X \\
    \end{array}
  \right),\\
  &\Upsilon= \left(
    \begin{array}{cccc}
      1 & 0 & \hdots & 0 \\
      0 & \frac{\beta\lambda_2}{1-\beta\lambda_2} & \hdots & 0 \\
      \vdots & \vdots & \ddots & \vdots\\
      0 & 0 & \hdots & \frac{\beta\lambda_X}{1-\beta\lambda_X} \\
    \end{array}
  \right).
 \end{align*}
 \end{itemize}
\end{assumption}
\begin{remark}
Assumption~\ref{assp:1}.1 ensures that the higher the quality of the channel's current state the higher is the likelihood that the next channel state will be of high quality.
Assumption~\ref{assp:1}.3 along with 1.1-1.2 ensure that the information states of all projects can be ordered at all times in the sense of stochastic order.
Assumption~\ref{assp:1}.4  states that initially the channels can be ordered in terms of their quality.
Assumption~\ref{assp:1}.5  states that the instantaneous rewards obtained at different states of the channel are sufficiently separated.
\end{remark}

\subsection{Properties}
Under Assumption~\ref{assp:1}.1-\ref{assp:1}.5, we have some important propositions concerning the structure of information state in the following, which are proved in Appendix~\ref{ap:Prop}.

\begin{proposition}\label{pro:A_inc}
Let $x_1,x_2\in \Pi(X)$ and $x_1 ~{\leq}_r~x_2$, then $(A_1)'~{\leq}_r~A' x_1 ~{\leq}_r~A' x_2~{\leq}_r~(A_X)'$.
\end{proposition}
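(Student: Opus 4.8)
The plan is to collapse all three inequalities into a single monotonicity property of the linear map $x \mapsto A'x$ on the simplex, and then to prove that property by a bilinear symmetrization argument driven entirely by Assumption~\ref{assp:1}.1. First I would record two elementary facts. Applying $A'$ to the unit vectors recovers the transposed rows of $A$, namely $A'e_1 = (A_1)'$ and $A'e_X = (A_X)'$, so the two outer bounds are simply the images under $A'$ of $e_1$ and $e_X$. Moreover $e_1$ and $e_X$ are the MLR-least and MLR-greatest elements of $\Pi(X)$: checking the definition directly, every cross-product inequality in $e_1 \leq_r x \leq_r e_X$ collapses either to $0 \le x(i)$ or to $0 \le 0$, so it holds for all $x \in \Pi(X)$. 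Consequently the entire proposition reduces to the monotonicity statement $x_1 \leq_r x_2 \Rightarrow A'x_1 \leq_r A'x_2$, because chaining $e_1 \leq_r x_1 \leq_r x_2 \leq_r e_X$ and applying the map termwise delivers $(A_1)' \leq_r A'x_1 \leq_r A'x_2 \leq_r (A_X)'$ all at once.

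To prove the monotonicity I would write out the defining MLR cross-difference for $y_r := A'x_r$. Using $y_r(k) = \sum_i a_{ik}\,x_r(i)$, the bilinear quantity governing $A'x_1 \leq_r A'x_2$ at a pair of indices $k > l$ expands to
\begin{equation*}
\sum_{i,j} \big(a_{ik}a_{jl} - a_{il}a_{jk}\big)\, x_1(i)\,x_2(j).
\end{equation*}
The key manoeuvre is to symmetrize by pairing the summand $(i,j)$ with $(j,i)$: the diagonal $i = j$ cancels, and each off-diagonal pair collapses to a product of two $2\times2$ minors,
\begin{equation*}
\big(a_{ik}a_{jl} - a_{il}a_{jk}\big)\big(x_1(i)x_2(j) - x_1(j)x_2(i)\big),
\end{equation*}
summed over $i < j$. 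The first factor is the $2\times2$ minor of $A$ on rows $\{i,j\}$ and columns $\{k,l\}$, whose sign is fixed by the MLR ordering of the rows in Assumption~\ref{assp:1}.1 (equivalently, $A$ is totally positive of order two); the second factor carries the opposite-but-fixed sign supplied by the hypothesis $x_1 \leq_r x_2$. Hence every paired term has a constant sign, the whole sum is sign-definite, and the desired MLR relation between $A'x_1$ and $A'x_2$ follows.

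The main obstacle is the sign bookkeeping in this middle step. Assumption~\ref{assp:1}.1 asserts the MLR order only between \emph{adjacent} rows $A_i \leq_r A_{i+1}$, whereas the symmetrized sum invokes the minor sign for \emph{all} index pairs $i < j$ together with all $k > l$. I would close this gap by appealing to the standard fact that sign-definiteness of the adjacent $2\times2$ minors of a stochastic matrix propagates to total positivity of order two, so every minor on rows $\{i,j\}$ and columns $\{k,l\}$ inherits the correct sign. Once that propagation is in hand, verifying that the two fixed signs multiply to the single sign required for every pair is routine, and the extreme-point reduction of the first paragraph then yields the two outer inequalities with no additional work.
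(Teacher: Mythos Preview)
Your proposal is correct and follows essentially the same route as the paper: both reduce the outer inequalities to the monotonicity $x_1\leq_r x_2 \Rightarrow A'x_1\leq_r A'x_2$ via $e_1\leq_r x\leq_r e_X$ and $A'e_1=(A_1)'$, $A'e_X=(A_X)'$, and both prove that monotonicity by expanding the MLR cross-difference as a bilinear sum and symmetrizing over the row indices so that each term becomes a product of a $2\times2$ minor of $A$ (controlled by Assumption~\ref{assp:1}.1) and a $2\times2$ minor of $(x_1,x_2)$ (controlled by $x_1\leq_r x_2$). The only cosmetic difference is that the paper silently uses $A_k\geq_r A_l$ for all $k\ge l$, whereas you flag the adjacent-versus-all-pairs issue and propose to close it via TP2 propagation; note that simple transitivity of $\leq_r$ already delivers $A_i\leq_r A_j$ for all $i<j$ from the chain in Assumption~\ref{assp:1}.1, so the TP2 detour is unnecessary.
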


Proposition~\ref{pro:A_inc} states that if at any time $t$ the information states of two channels are stochastically ordered and none of these channels is chosen at $t$, then the same stochastic order between the information states at time $t+1$ is maintained.

\begin{proposition}\label{pro:T_inc}
Let $x_1,x_2\in \Pi(X)$ and $(A_1)'~{\leq}_r~x_1 ~{\leq}_r~x_2~{\leq}_r~(A_X)'$, then  $T(x_1,K)~{\leq}_r~T(x_2,K)$.
\end{proposition}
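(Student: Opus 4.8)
The plan is to prove Proposition~\ref{pro:T_inc} by showing that the filtering operator $T(\cdot,K)$ preserves the MLR order $\leq_r$ when restricted to the interval $[(A_1)',(A_X)']$. Recall from~\eqref{eq:pr_tran_o} that $T(x,K)=\frac{B(K)A'x}{d(x,K)}$, where $d(x,K)=\mathbf{1}'_X B(K)A'x$ is merely a positive normalizing scalar. Since the MLR order compares vectors only up to positive scaling (the defining inequalities $x_1(i)x_2(j)\leq x_2(i)x_1(j)$ are homogeneous of degree one in each argument), the normalizer $d$ is irrelevant to the order. Hence it suffices to establish that the \emph{unnormalized} map $x\mapsto B(K)A'x$ is monotone with respect to $\leq_r$ on the given interval.

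First I would decompose the action of $B(K)A'$ into two successive linear maps: multiplication by $A'$, followed by multiplication by the diagonal matrix $B(K)=\mathrm{diag}[b_{1K},\cdots,b_{XK}]$. For the first map, Proposition~\ref{pro:A_inc} is directly available: from $(A_1)'\leq_r x_1\leq_r x_2\leq_r (A_X)'$ it yields $A'x_1\leq_r A'x_2$ (indeed the full chain $(A_1)'\leq_r A'x_1\leq_r A'x_2\leq_r(A_X)'$). So the remaining work is to show that left-multiplication by the diagonal matrix $B(K)$ preserves $\leq_r$. Writing $y_1=A'x_1$ and $y_2=A'x_2$ with $y_1\leq_r y_2$, I must verify that $B(K)y_1\leq_r B(K)y_2$, i.e. that for all $i>j$, $b_{iK}y_1(i)\cdot b_{jK}y_2(j)\leq b_{jK}y_2(i)\cdot b_{iK}y_1(j)$. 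But the factor $b_{iK}b_{jK}$ appears on both sides and is nonnegative, so this reduces immediately to $y_1(i)y_2(j)\leq y_2(i)y_1(j)$, which is precisely $y_1\leq_r y_2$. Thus diagonal scaling with nonnegative entries \emph{always} preserves the MLR order, and the claim follows by composing the two steps.

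The main thing to be careful about is the role of the interval hypothesis $(A_1)'\leq_r x_1\leq_r x_2\leq_r (A_X)'$: it is \emph{not} needed for the monotonicity of the diagonal map (which is unconditional), but it is exactly what Proposition~\ref{pro:A_inc} requires as input to conclude $A'x_1\leq_r A'x_2$, and more importantly it is what guarantees that the outputs remain in the regime where the next application of $T$ (in later arguments) stays well-defined and ordered. I would therefore state the argument as the composition $x\mapsto A'x\mapsto B(K)A'x$, invoking Proposition~\ref{pro:A_inc} for the first arrow and the elementary cancellation above for the second, and finally discarding the positive scalar $d(x,K)$ by homogeneity of $\leq_r$. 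The only potential subtlety, which I would check explicitly, is that all the $b_{iK}$ are strictly positive (or at least that the normalizer $d(x_1,K),d(x_2,K)>0$ so that $T$ is well-defined on the interval); if some $b_{iK}=0$ the reduction still goes through for the inequality but one must ensure the resulting vectors are nonzero before normalizing, which is guaranteed as long as $B(K)A'x$ is not the zero vector on the interval.
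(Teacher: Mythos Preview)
Your proof is correct and follows essentially the same approach as the paper: both first invoke Proposition~\ref{pro:A_inc} to obtain $z_1:=A'x_1\leq_r A'x_2=:z_2$, and then observe that the diagonal factor $B(K)$ contributes the same nonnegative coefficient $b_{iK}b_{jK}$ to both sides of the MLR inequality, reducing it to $z_1\leq_r z_2$ (the paper simply carries out this cancellation as one explicit computation rather than phrasing it as ``diagonal scaling preserves $\leq_r$''). One small remark: Proposition~\ref{pro:A_inc} actually only needs $x_1\leq_r x_2$ as its hypothesis (the interval bounds $(A_1)'\leq_r\cdot\leq_r(A_X)'$ appear in its \emph{conclusion}), so the interval assumption in Proposition~\ref{pro:T_inc} is stronger than what the present argument strictly requires---it is there, as you correctly intuit, for the downstream use in Proposition~\ref{prop:infsta_sepe}.
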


Proposition~\ref{pro:T_inc} states the increasing monotonicity of updating rule with information state for scheduled project.

\begin{proposition}\label{prop:O_inc}
Let $x\in \Pi(X)$ and $A_1~{\leq}_r~x ~{\leq}_r~A_X$, then  $T(x,k)~{\leq}_r~T(x,m)$ for any $1\leq k\leq m \leq Y$.
\end{proposition}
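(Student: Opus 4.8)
The plan is to reduce the claimed MLR inequality to the componentwise observation ordering in Assumption~\ref{assp:1}.2, exploiting that the dependence of $T(x,\cdot)$ on the observation index enters \emph{only} through the diagonal matrix $B(\cdot)$, while the vector $A'x$ is common to both sides.

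First I would recall that $T(x,k)=B(k)A'x/d(x,k)$ and $T(x,m)=B(m)A'x/d(x,m)$, where $d(x,k),d(x,m)>0$ so that the filter is well defined (this positivity is the only place the range hypothesis $A_1\leq_r x\leq_r A_X$ is needed). Since the MLR order is invariant under multiplication of a belief vector by a positive scalar---the defining inequalities $x_1(i)x_2(j)\leq x_2(i)x_1(j)$ are homogeneous of degree one in each argument---the normalizing constants $d(x,k)$ and $d(x,m)$ may be discarded. Hence it suffices to establish the unnormalized statement $B(k)A'x\leq_r B(m)A'x$.

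Next I would write $z:=A'x$, which is again a probability vector with nonnegative entries, because $A$ is row-stochastic and therefore $\mathbf{1}'_X A'x=\mathbf{1}'_X x=1$. The $i$-th components of the two vectors are $b_{ik}z_i$ and $b_{im}z_i$, so the MLR inequality $B(k)z\leq_r B(m)z$ amounts to requiring, for every pair $i>j$,
\begin{equation*}
b_{ik}\,b_{jm}\,z_i z_j\ \leq\ b_{im}\,b_{jk}\,z_i z_j .
\end{equation*}
If $z_iz_j=0$ both sides vanish and the inequality holds trivially. If $z_iz_j>0$ I divide through by $z_iz_j$ to obtain $b_{ik}b_{jm}\leq b_{im}b_{jk}$ for $i>j$, which is precisely the statement $B(k)\leq_r B(m)$ when the columns of $B$ are read as vectors indexed by the state. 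As $k\leq m$, this is exactly Assumption~\ref{assp:1}.2. Combining the two cases gives $B(k)z\leq_r B(m)z$, and reinstating the (irrelevant) normalizers yields $T(x,k)\leq_r T(x,m)$.

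The argument is essentially a verification that the MLR order is preserved under componentwise multiplication by a common nonnegative weight, so I do not expect a serious obstacle. The only points demanding care are the scale-invariance step that lets me drop the denominators and the bookkeeping of the degenerate case $z_iz_j=0$. I would also remark that $A_1\leq_r x\leq_r A_X$ plays no role in the ordering itself and is invoked merely to keep $T(x,k)$ and $T(x,m)$ well defined.
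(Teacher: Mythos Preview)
Your argument is correct and essentially identical to the paper's: both set $z=A'x$, form the MLR cross-product difference, and reduce it to $(b_{im}b_{jk}-b_{jm}b_{ik})z(i)z(j)\geq 0$ via Assumption~\ref{assp:1}.2. The only cosmetic difference is that you first invoke scale-invariance to discard the normalizers, whereas the paper simply carries the positive denominators through the computation; your observation that the hypothesis $A_1\leq_r x\leq_r A_X$ is not actually used in the ordering step is also accurate.
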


Proposition~\ref{prop:O_inc} states the increasing monotonicity of updating rule with the increasing number of observation state for scheduled project.

\begin{proposition}\label{prop:infsta_sepe}
Under Assumption~\ref{assp:1}, we have either $x^{(l)}_t ~{\leq}_s~ x^{(n)}_t $ or $x^{(n)}_t ~{\leq}_s~ x^{(l)}_t $ for all $l,n \in \{1,2,\cdots, N\}$ for all $t$.
\end{proposition}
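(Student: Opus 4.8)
The plan is to prove the claim by induction on $t$, carrying the stronger invariant that at every time the belief vectors of all $N$ projects are pairwise comparable in the MLR order $\geq_r$ and lie in the MLR-interval $[A_1,A_X]$; since $\geq_r$ implies $\geq_s$ (Proposition~\ref{prop:result}.1), this immediately yields the stated first-order stochastic comparability. The base case $t=0$ is exactly Assumption~\ref{assp:1}.4, which already places $x^{(1)}_0,\dots,x^{(N)}_0$ on a single $\geq_r$-chain inside $[A_1,A_X]$ (recall $A_1=A'e_1$ and $A_X=A'e_X$).

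For the inductive step, suppose the invariant holds at $t$ and let $u_t$ be the project worked on. I would first record a \emph{universal threshold} fact: for every $x\in[A_1,A_X]$,
\[
 T(x,K-1)~\leq_r~(A')^2 e_1~\leq_r~T(x,K),
\]
obtained by chaining $T(x,K-1)\leq_r T(A'e_X,K-1)\leq_r (A')^2e_1$ with $(A')^2e_1\leq_r T(A'e_1,K)\leq_r T(x,K)$, i.e. combining monotonicity in the belief (Proposition~\ref{pro:T_inc}) with the two anchor inequalities of Assumption~\ref{assp:1}.3. Together with Proposition~\ref{prop:O_inc} this shows that any observation $m\leq K-1$ drives the active belief below the pivot $(A')^2e_1$, while any $m\geq K$ drives it above.

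The passive projects ($n\neq u_t$) move to $A'x^{(n)}_t$, and Proposition~\ref{pro:A_inc} both preserves their mutual $\geq_r$-order and keeps them inside $[A_1,A_X]$; moreover, since $x^{(n)}_t\geq_r A_1=A'e_1$, it gives $A'x^{(n)}_t\geq_r A'(A'e_1)=(A')^2e_1$, so every drifted belief stays \emph{above} the pivot. Hence in the ``bad observation'' branch $m\leq K-1$ the active belief satisfies $x^{(u_t)}_{t+1}=T(x^{(u_t)}_t,m)\leq_r (A')^2e_1\leq_r A'x^{(n)}_t$ for all $n$, so it simply becomes the $\geq_r$-least element and the chain is restored with no further work.

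The main obstacle is the ``good observation'' branch $m\geq K$, where the active belief and all drifted beliefs lie \emph{above} the pivot, so $(A')^2e_1$ alone no longer separates them. Here I would reduce the required comparison, via the monotonicities of Propositions~\ref{pro:T_inc} and~\ref{prop:O_inc}, to relating an active update $T(\,\cdot\,,K)$ to a free passive drift $A'(\,\cdot\,)$ of a belief in $[A_1,A_X]$. Establishing that a high-index observation updates a belief at least as favourably (in $\geq_r$) as a passive transition is exactly the point where the full total-positivity structure of $A$ and $B$ (Assumptions~\ref{assp:1}.1--\ref{assp:1}.2) must be exploited, rather than only the anchor/threshold condition~\ref{assp:1}.3. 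This is the step I expect to be the most delicate, since a naive monotonicity chain only recovers the common lower bound $(A')^2e_1$; it is precisely this comparison that confines the entire reachable belief set to a single $\geq_r$-chain and thereby closes the induction and, with Proposition~\ref{prop:result}.1, proves Proposition~\ref{prop:infsta_sepe}.
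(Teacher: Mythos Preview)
Your inductive framework—carrying an MLR chain inside $[(A_1)',(A_X)']$, using $(A')^2e_1$ as a pivot, and splitting on the observation index at $K$—is exactly the skeleton of the paper's argument; the bad-observation branch and the passive-drift ordering are handled the same way.

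The genuine gap is the good-observation branch, and here your diagnosis is inverted. You say that comparing $T(\cdot,K)$ to the passive drift $A'(\cdot)$ ``is exactly the point where the full total-positivity structure of $A$ and $B$ (Assumptions~\ref{assp:1}.1--\ref{assp:1}.2) must be exploited, rather than only the anchor/threshold condition~\ref{assp:1}.3.'' In fact it is the other way round: TP2 of $A$ and $B$ alone will not give you $T(x,K)\geq_r A'x$, whereas Assumption~\ref{assp:1}.3 does. Write $\phi(z)=B(K)z/\mathbf{1}'_XB(K)z$, so that $T(x,K)=\phi(A'x)$. A direct check shows $\phi(z)\geq_r z$ holds for a strictly positive $z$ iff $b_{1K}\le b_{2K}\le\cdots\le b_{XK}$; and the first anchor inequality $T(A'e_1,K)\geq_r (A')^2e_1$ is precisely $\phi(z_0)\geq_r z_0$ at $z_0=(A')^2e_1$, which therefore forces this column monotonicity and hence $\phi(z)\geq_r z$, i.e.\ $T(x,K)\geq_r A'x$, for every $x$. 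The paper packages the same idea as a monotonicity statement for the map $z\mapsto\phi(z)-z$ anchored at $z_0$. Either way, the missing ingredient you need is extracted from Assumption~\ref{assp:1}.3, not from~\ref{assp:1}.1--\ref{assp:1}.2; pursuing a pure TP2 argument will not close the step you flagged.
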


Proposition~\ref{prop:infsta_sepe} states that under Assumption~\ref{assp:1}, the information states of all projects can be ordered stochastically at all times.

Now we give an important structural property on transition matrix in the following proposition.
\begin{proposition}
\label{prop:Tmatrix}
Suppose that transition matrix $A$ has $X$ eigenvalues $\lambda_1\geq \lambda_2\geq \cdots \geq \lambda_X$ and the corresponding orthogonal eigenvectors are $V_1,V_2,\cdots, V_X$. If $x_1,x_2 \in \Pi(X)$, then we have
\begin{itemize}
  \item $\lambda_1=1$ and $V_1=\frac{1}{\sqrt{X}} \mathbf{1}_X$;
  \item for any $\lambda$,
\begin{equation}\label{eq:matrix}
    \Lambda_1V'_1 (x_1-x_2)=\Lambda_2V'_1 (x_1-x_2),
\end{equation}
where
 \begin{align*}
 &\Lambda_1=
  \left(
    \begin{array}{cccc}
      \lambda_1 & 0 & \hdots & 0 \\
      0 & \lambda_2 & \hdots & 0 \\
     \vdots & \vdots &\ddots & \vdots \\
      0 & 0& \hdots & \lambda_4 \\
    \end{array}
  \right), \\
 &\Lambda_2=
  \left(
    \begin{array}{cccc}
      \lambda & 0 & \hdots & 0 \\
      0 & \lambda_2 & \hdots & 0 \\
     \vdots & \vdots &\ddots & \vdots \\
      0 & 0& \hdots & \lambda_4 \\
    \end{array}
  \right).
 \end{align*}
\end{itemize}
\end{proposition}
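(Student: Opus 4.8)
The plan is to treat the two bullets separately, since the first is a standard spectral fact about stochastic matrices while the second reduces to a one-line consequence of the probability-simplex constraint.

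For the first bullet I would argue as follows. Because each row of the transition matrix $A$ sums to one, $A\mathbf{1}_X=\mathbf{1}_X$, so $\mathbf{1}_X$ is a right eigenvector with eigenvalue $1$; the Perron--Frobenius theorem for stochastic matrices gives spectral radius $1$, so every eigenvalue satisfies $|\lambda_i|\le 1$ and the largest one is $\lambda_1=1$. The stated orthogonality of the eigenvectors means $A=V\Lambda V^{-1}$ with $V^{-1}=V'$ and $\Lambda$ real, whence $A=V\Lambda V'=A'$ is symmetric, its eigenvalues are real, and the decreasing ordering $\lambda_1\ge\cdots\ge\lambda_X$ is well defined. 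Normalizing $\mathbf{1}_X$ to unit length yields $V_1=\frac{1}{\sqrt{X}}\mathbf{1}_X$, as claimed.

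For the second bullet the single fact I would extract is that the first coordinate of $V'(x_1-x_2)$ vanishes. Since $x_1,x_2\in\Pi(X)$ both sum to one, $V_1'(x_1-x_2)=\frac{1}{\sqrt{X}}\mathbf{1}'_X(x_1-x_2)=\frac{1}{\sqrt{X}}(1-1)=0$. The diagonal matrices $\Lambda_1$ and $\Lambda_2$ coincide in every slot except $(1,1)$, where they read $\lambda_1$ and $\lambda$ respectively. Reading \eqref{eq:matrix} coordinatewise, the $i$-th entry of each side is $(\Lambda_k)_{ii}\,V_i'(x_1-x_2)$: for $i\ge 2$ the factors agree because $(\Lambda_1)_{ii}=(\Lambda_2)_{ii}=\lambda_i$, while for $i=1$ both entries are zero because $V_1'(x_1-x_2)=0$ annihilates the mismatched factor. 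Hence the two sides are equal for every choice of $\lambda$.

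The only genuine obstacle here is notational rather than mathematical: as printed, \eqref{eq:matrix} multiplies the diagonal matrix $\Lambda_k$ against the \emph{scalar} $V_1'(x_1-x_2)$, which is dimensionally inconsistent, so the intended object must be the full transform $V'(x_1-x_2)$; under that reading the proof is exactly the elementary computation above. The point worth emphasizing---because it is precisely what licenses setting $\Upsilon_{11}=1$ rather than $\frac{\beta\lambda_1}{1-\beta\lambda_1}$ in Assumption~\ref{assp:1}.5---is conceptual: every belief difference $x_1-x_2$ lies in the zero-sum hyperplane orthogonal to $V_1\propto\mathbf{1}_X$, so the dominant eigenvalue never acts on it, and one may replace $\lambda_1$ by an arbitrary $\lambda$ (or by any spectral function evaluated at $\lambda_1$) without altering how $A$ transforms $x_1-x_2$.
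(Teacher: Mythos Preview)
Your proposal is correct and follows essentially the same route as the paper: for the first bullet the paper simply verifies $A\mathbf{1}_X=\mathbf{1}_X$ (your invocation of Perron--Frobenius and the symmetry forced by orthogonality is a welcome addition that the paper omits), and for the second bullet the paper computes $\Lambda_k V'(x_1-x_2)$ componentwise and uses $V_1'(x_1-x_2)=\frac{1}{\sqrt{X}}\mathbf{1}_X'(x_1-x_2)=0$, exactly as you do. Your observation that the printed $V_1'$ in \eqref{eq:matrix} must be the full $V'$ is also confirmed by the paper's own proof, which silently uses $V'$ throughout.
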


Proposition~\ref{prop:Tmatrix} states that 1) for any transition matrix, the largest eigenvalue is 1, named as \emph{trivial eigenvalue}, and its corresponding eigenvector is $\frac{1}{\sqrt{X}} \mathbf{1}_X$, named as \emph{trivial eigenvector}; 2) for any two information states, $x_1,x_2\in \Pi(X)$, one special equation holds where the largest eigenvalue 1 can be replacing by any value.

\begin{proposition}\label{prop:eig_sum}
 Given $x_1, x_2\in \Pi(X)$, we have $$\displaystyle R' \sum_{i=1}^{\infty}(\beta A')^i (x_1-x_2) = R' Q' (x_1-x_2) = R' (V \Upsilon V^{-1})' (x_1-x_2).$$
\end{proposition}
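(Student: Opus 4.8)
The plan is to diagonalize $A$ and push the entire computation into the eigenbasis, where the infinite sum becomes a collection of scalar geometric series, one per eigenvalue. Writing $A = V\Lambda V^{-1}$ and transposing gives $A' = (V^{-1})'\Lambda V'$, hence $(\beta A')^i = (V^{-1})'(\beta\Lambda)^i V'$ for every $i$, so that
$$\sum_{i=1}^{\infty} (\beta A')^i (x_1-x_2) = (V^{-1})'\Big[\sum_{i=1}^{\infty} (\beta\Lambda)^i\Big]\,V'(x_1-x_2).$$
Setting $c := V'(x_1-x_2)$, everything then reduces to understanding the diagonal action $\sum_{i\ge1}(\beta\Lambda)^i c$ one coordinate at a time.

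The key observation---and the reason the identity holds despite the presence of the unit eigenvalue---is that the coordinate of $c$ along the trivial eigenvector vanishes. By Proposition~\ref{prop:Tmatrix} we have $\lambda_1 = 1$ and $V_1 = \frac{1}{\sqrt{X}}\mathbf{1}_X$, so the first entry of $c$ is
$$c_1 = V_1'(x_1-x_2) = \tfrac{1}{\sqrt{X}}\,\mathbf{1}_X'(x_1-x_2) = \tfrac{1}{\sqrt{X}}(1-1) = 0,$$
using that both $x_1,x_2\in\Pi(X)$ satisfy $\mathbf{1}_X' x = 1$. This is precisely the content of Proposition~\ref{prop:Tmatrix} that the eigenvalue $1$ may be replaced by any value: acting on $x_1-x_2$, the trivial mode contributes nothing.

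With $c_1 = 0$ in hand, I would evaluate $\sum_{i\ge1}(\beta\Lambda)^i c$ componentwise. For $j=1$ every summand equals $(\beta\cdot 1)^i c_1 = 0$, so that entry is $0$ with no convergence concern at all. For $j\ge2$ the non-trivial eigenvalues satisfy $|\beta\lambda_j|<1$, so the series converges to $\sum_{i\ge1}(\beta\lambda_j)^i c_j = \frac{\beta\lambda_j}{1-\beta\lambda_j}c_j$. The resulting vector is exactly $\Upsilon c$: its first entry is $1\cdot c_1 = 0$ and its $j$-th entry ($j\ge2$) is $\frac{\beta\lambda_j}{1-\beta\lambda_j}c_j$, matching the definition of $\Upsilon$ term by term. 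Therefore
$$\sum_{i=1}^{\infty} (\beta A')^i (x_1-x_2) = (V^{-1})'\Upsilon V'(x_1-x_2) = (V\Upsilon V^{-1})'(x_1-x_2) = Q'(x_1-x_2),$$
the middle equality using $(V\Upsilon V^{-1})' = (V^{-1})'\Upsilon' V' = (V^{-1})'\Upsilon V'$ since $\Upsilon$ is diagonal. Premultiplying by $R'$ and substituting $Q = V\Upsilon V^{-1}$ gives the three stated expressions.

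I expect the only delicate point to be the treatment of the unit eigenvalue. Summing $\sum_{i}(\beta A')^i$ as a bare operator diverges when $\beta=1$ (and is the source of the mismatch between the naive first diagonal entry $\frac{\beta}{1-\beta}$ and the value $1$ appearing in $\Upsilon$), so the argument must apply the series directly to $x_1-x_2$ and invoke $c_1 = 0$ \emph{before} summing, rather than summing first and evaluating afterward. Once the trivial mode is decoupled this way, the remainder is routine bookkeeping in the eigenbasis.
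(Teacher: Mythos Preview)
Your proof is correct and follows the same route as the paper: diagonalize $A' = (V^{-1})'\Lambda V'$, observe via Proposition~\ref{prop:Tmatrix} that the trivial eigencoordinate of $V'(x_1-x_2)$ vanishes (the paper phrases this as ``replace $\Lambda$ by $\Lambda_2$'' while you compute $c_1=0$ directly), and then sum the remaining geometric series to obtain $\Upsilon$. Your explicit remark that the series must be applied to $x_1-x_2$ \emph{before} summing---to avoid the divergence of the operator series at the unit eigenvalue---is a point the paper leaves implicit.
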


Proposition~\ref{prop:eig_sum} states that the accumulated reward difference between two different state information vectors can be simply written as a matrix form.

\begin{proposition}\label{prop:R_diff}
$R' ( e_{i}- e_j)\geq R' Q' ( e_{i}- e_j)$ ($1\leq j < i \leq X$).
\end{proposition}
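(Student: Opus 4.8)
The plan is to reduce the arbitrary index pair $(i,j)$ with $j<i$ to the adjacent-index inequality that is already granted to us in Assumption~\ref{assp:1}.5, by exploiting a telescoping decomposition together with the linearity of the two maps $v\mapsto R'v$ and $v\mapsto R'Q'v$. The starting observation is purely combinatorial: for any $1\le j<i\le X$ the canonical vectors telescope as
\begin{equation*}
e_i-e_j=\sum_{k=j}^{i-1}\bigl(e_{k+1}-e_k\bigr),
\end{equation*}
since the intermediate terms cancel in pairs. This is the only structural idea needed; everything else is bookkeeping.

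With this in hand I would apply the linear functionals on both sides. On the left,
\begin{equation*}
R'(e_i-e_j)=\sum_{k=j}^{i-1}R'\bigl(e_{k+1}-e_k\bigr),
\end{equation*}
and on the right, using that $Q'$ is a fixed matrix so $R'Q'$ is again a linear functional,
\begin{equation*}
R'Q'(e_i-e_j)=\sum_{k=j}^{i-1}R'Q'\bigl(e_{k+1}-e_k\bigr).
\end{equation*}
Now for each summation index $k$ in the range $j\le k\le i-1$ we have $1\le k\le X-1$, because $1\le j$ forces $k\ge 1$ and $i\le X$ forces $k\le X-1$. Hence Assumption~\ref{assp:1}.5 applies to every term and yields $R'(e_{k+1}-e_k)\ge R'Q'(e_{k+1}-e_k)$. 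Summing these termwise inequalities over $k=j,\dots,i-1$ and substituting the two displayed identities gives exactly $R'(e_i-e_j)\ge R'Q'(e_i-e_j)$, which is the claim.

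I do not expect a genuine obstacle here: the proposition is essentially the ``non-adjacent'' extension of the hypothesis, and the telescoping argument transfers the adjacent bound to all pairs at once. The only point that requires a moment of care is confirming that the index range stays inside $\{1,\dots,X-1\}$ so that Assumption~\ref{assp:1}.5 is legitimately invoked for each term; the boundary check above settles this. If anything, the subtlety worth flagging for the reader is that the argument relies solely on linearity and the termwise inequality, and in particular does not need any positivity or monotonicity of $Q$ beyond what Assumption~\ref{assp:1}.5 already encodes.
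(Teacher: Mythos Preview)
Your proposal is correct and follows essentially the same approach as the paper: both telescope $e_i-e_j$ into adjacent differences $\sum_{k=j}^{i-1}(e_{k+1}-e_k)$, apply linearity, and invoke Assumption~\ref{assp:1}.5 termwise. Your version is slightly more careful in checking the index range, but otherwise the two arguments coincide.
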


\subsection{Analysis of Optimality}
We first give some bounds of performance difference on serval pairs of policies, and then derive the main theorem on the optimality of myopic policy.
\begin{lemma}
\label{lemma:bound_p}
Under Assumption~\ref{assp:1}, $\mathbf{x}^l_t=(x^{(-l)}_t, x^{(l)}_t)$, $\mathbf{\check{x}}^l_t=(x^{(-l)}_t, \check{x}^{(l)}_t)$, $x^{(l)}_t ~{\leq}_r~ \check{x}^{(l)}_t$,
we have for $1\le t\le T$
\begin{itemize}
 \item[(C1)] if $u'_t=u_t=l$,
\begin{align*}
    R'(\check{x}^{(l)}_t-x^{(l)}_t)
    &\le W^{u'}_t(\mathbf{\check{x}}^l_t)-W^{u}_t(\mathbf{x}^l_t) \\
    &\le \sum_{i=0}^{T-t}\beta^i R'(A')^i(\check{x}^{(l)}_t-x^{(l)}_t);
\end{align*}
  \item[(C2)] if $u'_t\neq l$, $u_t\neq l$, and $u'_t=u_t$,
\begin{align*}
    0 \le  W^{u'}_t(\mathbf{\check{x}}^l_t)-W^{u}_t(\mathbf{x}^l_t) \le \sum_{i=1}^{T-t}\beta^i R'(A')^i(\check{x}^{(l)}_t-x^{(l)}_t);
\end{align*}
  \item[(C3)]  if  $u'_t = l$ and $u_t\neq l$,
\begin{align*}
    0 \le  W^{u'}_t(\mathbf{\check{x}}^l_t)-W^{u}_t(\mathbf{x}^l_t) \le \sum_{i=0}^{T-t}\beta^i R'(A')^i(\check{x}^{(l)}_t-x^{(l)}_t).
\end{align*}
\end{itemize}
\end{lemma}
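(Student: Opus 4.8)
The plan is to prove (C1)--(C3) together by backward induction on $t$, from the terminal slot $t=T$ down to $t=1$. The three cases must be coupled in a single hypothesis because expanding $W^{u}_t$ by one step produces, inside the continuation $\digamma$, a comparison of two AVFs at slot $t+1$ whose type is dictated by the myopic choices there; I read $u_t,u'_t$ as the myopic actions on $\mathbf{x}^l_t$ and $\check{\mathbf{x}}^l_t$, so that improving the $l$th belief can only move us between the three listed configurations. For the base case $t=T$ the AVF is the immediate reward $R'x^{(u_T)}_T$: (C1) gives exactly $R'(\check{x}^{(l)}_T-x^{(l)}_T)$, matching the lower bound and the lone $i=0$ upper-bound term; (C2) gives $0$; and in (C3) the inequalities are pinned by the two myopic optimalities $R'\check{x}^{(l)}_T\ge R'x^{(n)}_T$ (on $\check{\mathbf{x}}^l_T$) and $R'x^{(u_T)}_T\ge R'x^{(l)}_T$ (on $\mathbf{x}^l_T$).

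For the inductive step the two structural tools are the decomposability of Lemma~\ref{lemma:decomposability_sym_ARQ}, which lets me write each AVF as a convex combination over the vertices $e_i$ of any one belief slot, and the normalization $\sum_{m\in\mathcal{Y}}B(m)=E$, which yields the identity $\sum_{m}d(x,m)\,T(x,m)=A'x$. A passive project simply evolves as $A'x^{(l)}_t$, and Proposition~\ref{pro:A_inc} promotes $x^{(l)}_t{\leq}_r\check{x}^{(l)}_t$ to $A'x^{(l)}_t{\leq}_r A'\check{x}^{(l)}_t$, so its perturbation is again of the form treated by the induction hypothesis one slot later. In (C1), where both policies work on $l$, the immediate reward supplies the $i=0$ term $R'(\check{x}^{(l)}_t-x^{(l)}_t)$ (hence the lower bound), while Proposition~\ref{pro:T_inc} and Proposition~\ref{prop:O_inc} keep the post-observation beliefs ordered so that the observation-averaged continuation is a single slot-$(t+1)$ comparison of $A'\check{x}^{(l)}_t$ against $A'x^{(l)}_t$; multiplying the induction hypothesis by $\beta$ shifts its upper-bound sum by one index and reassembles $\sum_{i=0}^{T-t}\beta^iR'(A')^i(\check{x}^{(l)}_t-x^{(l)}_t)$. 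In (C2) the immediate rewards cancel and, for each observation $m$ of the common project $n_0\neq l$, the summand is a slot-$(t+1)$ difference in the $l$-slot alone; bounding each by the induction hypothesis and using $\sum_m d(x^{(n_0)}_t,m)=1$ returns the $\sum_{i=1}^{T-t}$ bound.

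The real obstacle is (C3), where $u'_t=l$ on $\check{\mathbf{x}}^l_t$ but $u_t=n_0\neq l$ on $\mathbf{x}^l_t$, so the two policies observe \emph{different} projects. Here I would interpose the auxiliary value $W^{n_0}_t(\check{\mathbf{x}}^l_t)$ and telescope
\[
W^{l}_t(\check{\mathbf{x}}^l_t)-W^{n_0}_t(\mathbf{x}^l_t)=\big[W^{l}_t(\check{\mathbf{x}}^l_t)-W^{n_0}_t(\check{\mathbf{x}}^l_t)\big]+\big[W^{n_0}_t(\check{\mathbf{x}}^l_t)-W^{n_0}_t(\mathbf{x}^l_t)\big],
\]
whose second bracket is precisely a case-(C2) quantity that is already bounded. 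Everything then rests on the first bracket, a same-system comparison of two first actions on $\check{\mathbf{x}}^l_t$ in which $l$ (the myopic project) is observed versus $n_0$: the immediate part is $R'(\check{x}^{(l)}_t-x^{(n_0)}_t)\ge0$, and since $R'x^{(l)}_t\le R'x^{(n_0)}_t$ (as $n_0$ is myopic on $\mathbf{x}^l_t$) this immediate part is also $\le R'(\check{x}^{(l)}_t-x^{(l)}_t)$, giving exactly the $i=0$ slack needed on top of the (C2) sum. To control the residual difference of the two continuations---where the roles of $l$ and $n_0$ as observed/passive are swapped---I would decompose $\check{x}^{(l)}_t-x^{(l)}_t=\sum_{i=1}^{X-1}c_i(e_{i+1}-e_i)$ with $c_i\ge0$ (legitimate because ${\leq}_r$ implies ${\leq}_s$ by Proposition~\ref{prop:result}), reducing to elementary perturbations, and then lean on the reward-separation hypothesis Assumption~\ref{assp:1}.5, equivalently Proposition~\ref{prop:R_diff} ($R'(e_i-e_j)\ge R'Q'(e_i-e_j)$ with $Q'=\sum_{k\ge1}(\beta A')^k$ from Proposition~\ref{prop:eig_sum}), so that the one-step reward advantage of observing the better project dominates the discounted cost of the swap.

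I expect this swap estimate inside the first bracket of (C3) to be the crux of the whole lemma: it is the one place that cannot be dispatched by a single coordinatewise use of the induction hypothesis, and it is exactly where the total-order guarantee of Proposition~\ref{prop:infsta_sepe} (keeping all beliefs comparable after the swap) and the spectral accounting of Propositions~\ref{prop:Tmatrix} and~\ref{prop:eig_sum} must be combined with the reward separation to certify that the closed-form envelope $\sum_{i=0}^{T-t}\beta^iR'(A')^i(\check{x}^{(l)}_t-x^{(l)}_t)$ is never exceeded.
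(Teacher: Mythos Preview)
Your backward-induction scaffold and your treatment of the base case and of (C1)--(C2) are essentially the paper's argument: expand via Lemma~\ref{lemma:decomposability_sym_ARQ}, use $\sum_m B(m)=E$, and feed the resulting slot-$(t{+}1)$ differences on $(\mathbf{x}^{(-l)}_{t+1},e_j)$ versus $(\mathbf{x}^{(-l)}_{t+1},e_1)$ back into the induction hypothesis. That part is fine.

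The gap is in (C3). You interpose the \emph{action} $n_0$ on the \emph{same} state $\check{\mathbf{x}}^l_t$, producing the first bracket
\[
W^{l}_t(\check{\mathbf{x}}^l_t)-W^{n_0}_t(\check{\mathbf{x}}^l_t),
\]
which is a pure swap of first actions on one state. This quantity is \emph{not} of type (C1), (C2), or (C3): all three compare $W$ on two \emph{different} states with actions determined by those states, never two actions on the same state. To close (C3) you then need
\(
W^{l}_t(\check{\mathbf{x}}^l_t)-W^{n_0}_t(\check{\mathbf{x}}^l_t)\le R'(\check{x}^{(l)}_t-x^{(l)}_t),
\)
since the (C2) bound on your second bracket already consumes the entire $\sum_{i\ge 1}$ tail. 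But the best you can extract from (C1)/(C2) at time $t$ for this swap (via the symmetry trick on slot $l$) is
\(
\le \sum_{i=0}^{T-t}R'(\beta A')^i(\check{x}^{(l)}_t-x^{(n_0)}_t),
\)
which overshoots. Your proposed remedy---invoking Assumption~\ref{assp:1}.5 together with Propositions~\ref{prop:Tmatrix}--\ref{prop:R_diff}---is exactly the machinery of Lemma~\ref{lemma:exchange}, whose proof \emph{uses} the bounds of the present lemma; importing it here is circular. Note also that the paper's proof of Lemma~\ref{lemma:bound_p} never touches Assumption~\ref{assp:1}.5; its appearance in your plan is a symptom of the wrong telescoping point.

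The fix is to interpose a \emph{state}, not an action. Replace slot $l$'s belief by $x^{(n_0)}_t$, so that slots $l$ and $n_0$ become identical; then $W^{n_0}_t$ on that state equals $W^{l}_t$ on it, and your (C3) difference splits as
\[
\big[W^{l}_t(\ldots,\check{x}^{(l)}_t,\ldots)-W^{l}_t(\ldots,x^{(n_0)}_t,\ldots)\big]
+\big[W^{n_0}_t(\ldots,x^{(n_0)}_t,\ldots)-W^{n_0}_t(\ldots,x^{(l)}_t,\ldots)\big],
\]
a genuine (C1) piece plus a genuine (C2) piece at time $t$. Adding the two upper bounds and using $R'x^{(n_0)}_t\ge R'x^{(l)}_t$ (from $x^{(n_0)}_t\geq_s x^{(l)}_t$, since $n_0$ is myopic on $\mathbf{x}^l_t$) gives exactly $\sum_{i=0}^{T-t}\beta^iR'(A')^i(\check{x}^{(l)}_t-x^{(l)}_t)$; the lower bound $0$ follows from the lower bounds of (C1) and (C2). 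This is the paper's route, and it closes without any appeal to the reward-separation condition.
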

\begin{proof}
Please refer to Appendix~\ref{ap:bound_p}.
\end{proof}

\begin{remark}
We would like to emphasize on what conditions the bounds of Lemma~\ref{lemma:bound_p} are achieved. For (C1), the lower bound is achieved when project $l$ is scheduled at slot $t$ but never scheduled after $t$; the upper bound is achieved when $l$ is scheduled from $t$ to $T$. For (C2), the lower bound is achieved when project $l$ is never scheduled from $t$; the upper bound is achieved when $l$ is scheduled from $t+1$ to $T$. For (C3), the lower bound is achieved when project $l$ is never scheduled from $t$; the upper bound is achieved when $l$ is scheduled from $t$ to $T$.
\end{remark}

\begin{lemma}\label{lemma:exchange}
Under Assumption~\ref{assp:1}, we have then $W^{l}_t(x^{(1:N)}_t) > W^{n}_t(x^{(1:N)}_t)$ if $x^{(l)}_t ~{>}_r~ x^{(n)}_t$.
\end{lemma}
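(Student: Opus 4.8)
\textit{Proof proposal.} The plan is to prove the lemma by an interchange argument that reduces the comparison of the two candidate first-actions to the state-perturbation bounds already established in Lemma~\ref{lemma:bound_p}. Because all projects share the same reward vector $R$, transition matrix $A$, and observation matrix $B$, and because after slot $t$ both $W^{l}_t$ and $W^{n}_t$ continue with the \emph{same} myopic policy, the continuation value depends only on the state of the project scheduled at $t$ together with the unordered collection of states of the idle projects. Writing $Z$ for the (passively evolving, hence identical in both scenarios) states of the $N-2$ projects other than $l$ and $n$, I would introduce $H(a;P)$ for the AVF obtained by scheduling at $t$ a project in state $a$, leaving idle the projects with state-multiset $P$, and then acting myopically. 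With $a=x^{(l)}_t$, $b=x^{(n)}_t$, this notation gives $W^{l}_t(x^{(1:N)}_t)=H(a;\{b\}\cup Z)$ and $W^{n}_t(x^{(1:N)}_t)=H(b;\{a\}\cup Z)$, the sole difference being which of the two states is active and which is idle.

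Next I would telescope through the intermediate configuration $H(b;\{b\}\cup Z)$:
\begin{align*}
W^{l}_t-W^{n}_t
&=\big[H(a;\{b\}\cup Z)-H(b;\{b\}\cup Z)\big]\\
&\quad+\big[H(b;\{b\}\cup Z)-H(b;\{a\}\cup Z)\big].
\end{align*}
In the first bracket the scheduled project is the same in both terms and only its own state rises from $b$ to the MLR-larger $a$; this is exactly case~(C1) of Lemma~\ref{lemma:bound_p}, so the first bracket is at least $R'(a-b)$. In the second bracket the scheduled project (state $b$) is \emph{not} the one whose state changes, while a single idle project's state rises from $b$ to $a$; this is case~(C2), so the quantity inside its brackets (before the leading minus sign) is at most $\sum_{i=1}^{T-t}\beta^i R'(A')^i(a-b)$. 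Setting $\delta=a-b=x^{(l)}_t-x^{(n)}_t$ and combining, I obtain $W^{l}_t-W^{n}_t\ge R'\delta-\sum_{i=1}^{T-t}\beta^i R'(A')^i\delta$.

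It remains to show the right-hand side is strictly positive, and here I would invoke the eigen-structure results. Since $x^{(l)}_t\geq_r x^{(n)}_t$, Proposition~\ref{pro:A_inc} together with Proposition~\ref{prop:result} gives $(A')^i x^{(l)}_t\geq_s (A')^i x^{(n)}_t$, so each term $R'(A')^i\delta\ge 0$ (using that $R$ is increasing in the state). Hence the finite sum is dominated by the full series, which by Proposition~\ref{prop:eig_sum} equals $R'Q'\delta$; therefore $W^{l}_t-W^{n}_t\ge R'\delta-R'Q'\delta=R'(E-Q')\delta$. Finally I would decompose the zero-sum, stochastically dominant vector $\delta$ as $\delta=\sum_{j=1}^{X-1}c_j(e_{j+1}-e_j)$ with $c_j=\sum_{i=j+1}^{X}\delta(i)\ge 0$, and apply Proposition~\ref{prop:R_diff} term by term to get $R'(E-Q')\delta\ge 0$. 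Strictness follows from the \emph{strict} hypothesis $x^{(l)}_t>_r x^{(n)}_t$: then $\delta\neq 0$, at least one $c_j>0$, and the corresponding reward-separation inequality of Assumption~\ref{assp:1}.5 is strict.

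The main obstacle I anticipate is the first step, namely inserting the intermediate term so that each bracket matches a \emph{single} case of Lemma~\ref{lemma:bound_p}. This requires the myopic continuation to be invariant under relabeling of the projects, which legitimizes replacing the idle state $a$ by a second copy of $b$; and it is precisely the asymmetry between the two cases---(C1) supplies the undiscounted leading term $R'\delta$ while (C2) only supplies the tail $\sum_{i\ge 1}$---that produces the strict gap. The remaining steps (nonnegativity of the $(A')^i\delta$ terms and the termwise use of Proposition~\ref{prop:R_diff}) are routine.
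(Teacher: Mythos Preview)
Your proposal is correct and follows essentially the same route as the paper's own proof: both telescope through the intermediate configuration in which projects $l$ and $n$ carry the \emph{same} state $b=x^{(n)}_t$, invoke the label-symmetry of the AVF at that intermediate point, apply (C1) to the bracket where the scheduled project's state moves and (C2) to the bracket where an idle project's state moves, and then pass to the infinite series via Proposition~\ref{prop:eig_sum} before decomposing $\delta$ along the telescoping basis $\{e_{j+1}-e_j\}$ and invoking Proposition~\ref{prop:R_diff}. Your decomposition $\delta=\sum_{j=1}^{X-1}c_j(e_{j+1}-e_j)$ with $c_j=\sum_{i>j}\delta(i)\ge 0$ is in fact cleaner than the paper's version of the same step.

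Two small remarks. First, your sentence ``the quantity inside its brackets (before the leading minus sign)'' is slightly misphrased: in your own display there is no leading minus sign, so what you are really using is the lower bound $H(b;\{b\}\cup Z)-H(b;\{a\}\cup Z)\ge -\sum_{i\ge 1}\beta^iR'(A')^i\delta$, obtained from the (C2) upper bound after flipping the sign. Second, your strictness argument at the end overreaches: Assumption~\ref{assp:1}.5 is stated with $\ge$, not $>$, so from it alone you only get $R'(E-Q')\delta\ge 0$; the paper's own proof likewise establishes only the weak inequality despite the strict ``$>$'' in the lemma's statement. This is a wrinkle in the paper rather than a flaw in your argument.
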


\begin{proof}
By Lemma~\ref{lemma:bound_p}, we have
\begin{align*}
&W^{l}_t(x^{(1:N)}_t)- W^{n}_t(x^{(1:N)}_t) \\
=&[W^{l}_t(x^{(-l)}_t,x^{(l)}_t)-W^{l}_t(x^{(-l)}_t,x^{(n)}_t)]\\
&-[W^{l}_t(x^{(-l)}_t,x^{(n)}_t)-W^{n}_t(x^{(-n)}_t,x^{(n)}_t)] \\
=&[W^{l}_t(x^{(-l)}_t,x^{(l)}_t)-W^{l}_t(x^{(-l)}_t,x^{(n)}_t)]\\
&-[W^{n}_t(x^{(-l)}_t,x^{(n)}_t)-W^{n}_t(x^{(-n)}_t,x^{(n)}_t)] \\
\geq&  R'(\check{x}^{(l)}_t-x^{(l)}_t) -\sum_{i=1}^{T-t}\beta^i R'(A')^i(\check{x}^{(l)}_t-x^{(l)}_t)\\
=& R' \Big(E-\sum_{i=1}^{T-t}(\beta A')^i \Big)(\check{x}^{(l)}_t-x^{(l)}_t) \\
\geq & R' \Big(E-\sum_{i=1}^{\infty}(\beta A')^i \Big)(\check{x}^{(l)}_t-x^{(l)}_t) \\
\overset{(a)}= & R' \Big(E-V \Upsilon V^{-1}\Big)(\check{x}^{(l)}_t-x^{(l)}_t) \\
=& R'(E- Q') \sum^X_{j=2}\\
&\Big[\sum^X_{i=j}(\check{x}^{(l)}_t(i)-x^{(l)}_t(i))(e_j-e_{j-1})+ x^{(l)}_t(j)(e_j-e_{1}) \Big] \\
=&  \sum^X_{j=2}\Big[\sum^X_{i=j}(\check{x}^{(l)}_t(i)-x^{(l)}_t(i))R'(E-Q')(e_j-e_{j-1})\\
&+ x^{(l)}_t(j)R'(E- Q')(e_j-e_{1}) \Big] \\
=&  \sum^X_{j=2}\Big[\sum^X_{i=j}(\check{x}^{(l)}_t(i)-x^{(l)}_t(i))[R'(e_j-e_{j-1})-R'Q'(e_j-e_{j-1})]\\
&+ x^{(l)}_t(j)[R'(e_j-e_{1})-R'Q'(e_j-e_{1})] \Big] \\
\overset{(b)}\geq &0,
\end{align*}
where, the equality (a) is from Proposition~\ref{prop:eig_sum}, and the inequality (b) is from Proposition~\ref{prop:R_diff}, and $\sum^X_{i=j}(\check{x}^{(l)}_t(i)-x^{(l)}_t(i))\geq 0$ is due to $\check{x}^{(l)}_t~{\geq}_s~x^{(l)}_t$ from Proposition~\ref{prop:result}.
\end{proof}
\begin{remark}
Lemma~\ref{lemma:exchange} states that scheduling the project with better information state would bring more reward.
\end{remark}

Based on Lemma~\ref{lemma:exchange}, we have the following theorem which states the optimal condition of the myopic policy.
\begin{theorem}
\label{theorem:optimal_condition_heter_pos_case}
Under Assumption~\ref{assp:1}, the myopic  policy is optimal.
\end{theorem}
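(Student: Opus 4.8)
The plan is to prove optimality by backward induction on the decision stage $t$, establishing the stronger identity that the myopic value function coincides with the dynamic-programming value, namely $W^{\hat{u}}_t(x^{(1:N)}_t) = V_t(x^{(1:N)}_t)$ for every reachable belief configuration and every $t = 0,1,\ldots,T$. Once this identity holds at $t=0$, optimality of the myopic policy follows at once from the definition of $V_0$ in~\eqref{ob:dp}. Throughout, I would lean on Proposition~\ref{prop:infsta_sepe}, which guarantees that at each stage the $N$ belief vectors are totally ordered; combined with the initial ordering in Assumption~\ref{assp:1}.4 and the order-preservation results of Propositions~\ref{pro:A_inc}--\ref{prop:O_inc}, this ordering can be taken in the stronger MLR sense, so that the hypothesis $x^{(l)}_t >_r x^{(n)}_t$ of Lemma~\ref{lemma:exchange} is always available to the scheduler when comparing the best project against the others.

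For the base case $t=T$, the value $V_T = \max_{u_T}\mathbb{E}[R'(u_T)x^{(u_T)}_T]$ is a pure immediate-reward maximization. Since the projects are MLR-ordered, hence first-order stochastically ordered by Proposition~\ref{prop:result}, and $R$ has nondecreasing components (so that a stochastically larger belief yields a larger immediate reward), Proposition~\ref{prop:result} gives $R'x^{(\sigma_1)}_T \ge R'x^{(n)}_T$ for every $n$. Thus the myopic choice $\hat{u}_T=\sigma_1$ attains the maximum and $W^{\hat{u}}_T = V_T$.

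For the inductive step, assume $W^{\hat{u}}_{t+1} = V_{t+1}$ at every belief state. Substituting this identity into the Bellman recursion~\eqref{ob:dp} and comparing with the definition of the auxiliary value function in~\eqref{Eq:vf}, the continuation term of $V_t$ under any action $u_t$ becomes exactly the term $\digamma(x^{(1:N)}_t,u_t)$ appearing in $W^{u_t}_t$; hence $V_t(x^{(1:N)}_t) = \max_{u_t} W^{u_t}_t(x^{(1:N)}_t)$. It then remains only to identify the maximizing action. Because the MLR-best project $\sigma_1$ satisfies $x^{(\sigma_1)}_t \ge_r x^{(n)}_t$ for every other $n$, Lemma~\ref{lemma:exchange} yields $W^{\sigma_1}_t(x^{(1:N)}_t) \ge W^{n}_t(x^{(1:N)}_t)$ for all $n$, so the maximum is attained at the myopic action $\hat{u}_t=\sigma_1$ and $V_t = W^{\hat{u}}_t$. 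This closes the induction.

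The main obstacle, which the preceding propositions are precisely designed to remove, is ensuring that the hypotheses of Lemma~\ref{lemma:exchange} genuinely persist at every reachable configuration under the heterogeneous dynamics of the problem: the scheduled arm's belief is updated nonlinearly through the filter $T(\cdot,m)$, while the $N-1$ idle arms evolve linearly through $A'$. I would therefore verify that the total MLR ordering of the $N$ belief vectors is invariant under one step of these mixed updates, using monotonicity of $A'$ (Proposition~\ref{pro:A_inc}), monotonicity of the filter in the belief (Proposition~\ref{pro:T_inc}) and in the observation index (Proposition~\ref{prop:O_inc}), together with the sandwiching condition Assumption~\ref{assp:1}.3 that confines the updated scheduled belief between $(A')e_1$ and $(A')e_X$. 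Establishing this invariance guarantees that the ordered structure, and thus the applicability of Lemma~\ref{lemma:exchange}, propagates from stage $t$ to stage $t+1$ along every branch of the decision tree, which is exactly what the induction requires.
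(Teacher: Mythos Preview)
Your proposal is correct and follows essentially the same approach as the paper: both argue by backward induction and rely on Lemma~\ref{lemma:exchange} as the key comparison step, with Propositions~\ref{pro:A_inc}--\ref{prop:infsta_sepe} ensuring the required ordering of belief vectors persists along every sample path. The only cosmetic difference is that the paper phrases the inductive step as a contradiction (assuming a non-myopic action is optimal at time $t$ and invoking Lemma~\ref{lemma:exchange} to exhibit a strictly better action), whereas you establish directly that $V_t=\max_{u_t}W^{u_t}_t$ and identify the maximizer via Lemma~\ref{lemma:exchange}; these are logically equivalent formulations of the same argument.
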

\begin{proof}
When $T\nrightarrow \infty$, we prove the theorem by backward induction. The theorem holds trivially for $T$. Assume that it holds for $T-1, \cdots, t+1$, i.e., the optimal accessing policy is to access the best channels (in the sense of stochastic dominance in terms of ) from time slot $t+1$ to $T$. We now show that it holds for $t$.
Suppose, by contradiction, that given $x\triangleq \{x^{(i_1)}, \cdots, x^{(i_N)}\}$ and $x^{(1)}~{>}_s \cdots ~{>}_s ~x^{(N)}$, the optimal policy is to choose the best from time slot $t+1$ to $T$, and thus, at slot $t$, to choose $\mu_t= i_1 \neq 1 = \hat{\mu}_t$, given that the latter, $\hat{\mu}_t$, is to choose the best project in the sense of stochastic order at slot $t$. There must exist $i_n$ at slot $t$ such that $x^{(i_n)}~{>}_s ~ x^{(i_1)}$. It then follows from Lemma~\ref{lemma:exchange} that $W^{i_n}_t(x^{(1:N)}_t) > W_t^{i_1}(x^{(1:N)}_t)$,
which contradicts with the assumption that the latter is the optimal policy. This contradiction completes our proof for $T$.
When $T\rightarrow \infty$, the proof is finished.
\end{proof}

\subsection{Discussion}
\subsubsection{Comparison}
In~\cite{ouyang14}, the authors considered the problem of scheduling multiple channels with direct or perfect observation, and then the method is based on the information states of all channels in the sense of first order stochastic dominance order; that is, the critical property is to keep the information states completely ordered or separated in the sense of first order stochastic dominance order. However, in the case of indirect or imperfect observation, an observation matrix is introduced to replace the unit matrix $E$ for the direct observation considered in~\cite{ouyang14}. Hence, the stochastic dominance order is not sufficient to characterize the order of information states, and then the monotonic likelihood ratio order, a kind of more stronger stochastic order, is used to describe the order structure of information states.

The Assumption 1.5 is different from the Assumption (A4) of~\cite{ouyang14}.

\subsubsection{Bounds}
The bounds in (C1)-(C3) are not enough tight to drop the non-trivial Assumption~\ref{assp:1}.5. Actually, we conjecture the optimality of myopic policy is kept even without the Assumption~\ref{assp:1}.5. However, due to the constraint of the method adopted in this paper, we cannot obtain better bounds to drop the non-trivial Assumption~\ref{assp:1}.5. Therefore, one of further directions is to obtain the optimality of myopic policy without Assumption~\ref{assp:1}.5 by some new methods.

%

\section{Optimality Extension}
\label{sec:ext}

In this section, we first extend the obtained optimality results to the case in which the transition matrix is totally negative order, as a complementary to the totally positive order discussed in the previous section, which means that those relative propositions are stated here by replacing increasing monotonicity with deceasing monotonicity. Second, we extend the optimality to the case of scheduling multiple projects simultaneously.

\subsection{Assumptions}
Some important assumptions are stated in the following.
\begin{assumption}\label{assp:2}
Assume that
\begin{itemize}
  \item[1)] $A_1 ~{\geq}_r ~A_2 {\geq}_r~ \cdots~ {\geq}_r~ A_X$.
  \item[2)] $B(1) ~{\leq}_r ~B(2) {\leq}_r~ \cdots~ {\leq}_r~ B(Y)$.
  \item[3)] There exists some $K$ ($2\leq K \leq Y$) such that
   \begin{align*}
    &T(A'e_X,K)~{\leq}_r ~(A')^2 e_X,\\
    &T(A'e_1,K-1)~{\geq}_r~ (A')^2 e_X.
   \end{align*}
  \item[4)] $A_1~{\geq}_r~x^{(1)}_0~{\geq}_r~x^{(2)}_0~{\geq}_r~\cdots~{\geq}_r~x^{(N)}_0~{\geq}_r~A_X$.
  \item[5)] $R' ( e_{i+1}- e_i)\geq R' Q' ( e_{i+1}- e_i)$ ($1\leq i \leq X-1$), where $A= V \Lambda V^{-1} $, $Q = V \Upsilon V^{-1} $.
 \end{itemize}
\end{assumption}
\begin{remark}
Assumption~\ref{assp:2} differs from Assumption~\ref{assp:1} in three aspects, i.e., \ref{assp:2}.1,~\ref{assp:2}.3,~\ref{assp:2}.4, which reflects the inverse TP2 order~\cite{Muller2002} in matrix $A$.
\end{remark}

\subsection{Optimality}
Under Assumption~\ref{assp:2}, we have the following propositions similar to Proposition~\ref{pro:A_inc}---Proposition~\ref{prop:infsta_sepe}.
\begin{proposition}\label{pro:A_dec}
Let $x_1,x_2\in \Pi(X)$ and $x_1 ~{\leq}_r~x_2$, then $(A_1)'~{\geq}_r~A' x_1 ~{\geq}_r~A' x_2~{\geq}_r~(A_X)'$.
\end{proposition}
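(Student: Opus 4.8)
The plan is to establish Proposition~\ref{pro:A_dec} as the exact mirror image of Proposition~\ref{pro:A_inc}, with every MLR comparison reversed to reflect the \emph{inverse} TP2 structure imposed by Assumption~\ref{assp:2}.1. The starting observation is that $A'x = \sum_{i=1}^{X} x(i)\,(A_i)'$ is a convex mixture of the rows $(A_i)'$ of $A$, since $x\in\Pi(X)$. Under Assumption~\ref{assp:2}.1 these rows form a \emph{decreasing} MLR chain, $(A_1)'\geq_r (A_2)'\geq_r\cdots\geq_r (A_X)'$, whereas in the positive case of Proposition~\ref{pro:A_inc} they form an increasing one. Thus the proof decomposes into the same two ingredients as in the positive case---a sandwiching (outer) inequality and an order-monotonicity (middle) inequality---each now carrying the opposite MLR direction.

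For the outer inequalities $(A_1)'\geq_r A'x_k\geq_r (A_X)'$ I would use the elementary fact that a convex mixture of an MLR chain lies MLR-between its extremes: if $v_1\geq_r\cdots\geq_r v_X$ and $w=\sum_i p_i v_i$ with $p_i\ge0$, $\sum_i p_i=1$, then $v_1\geq_r w\geq_r v_X$. This is immediate from the definition of the MLR order: for $i>j$ the quantity $\sum_m p_m\big[v_1(i)v_m(j)-v_1(j)v_m(i)\big]$ is a nonnegative combination of terms each $\le0$ by $v_1\geq_r v_m$, giving $v_1\geq_r w$, and symmetrically $w\geq_r v_X$. Applying this with $v_i=(A_i)'$ and $w=A'x_k$ yields both outer bounds for $k=1,2$.

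The main obstacle is the middle inequality, the \emph{order reversal} $A'x_1\geq_r A'x_2$ extracted from the hypothesis $x_1\leq_r x_2$; this is where the decreasing-chain structure does the real work. I would handle it by reducing to the already-established positive case rather than redoing the basic-composition argument from scratch. Let $P$ denote the index-reversal matrix ($P_{ij}=1$ iff $j=X+1-i$, with $P=P'=P^{-1}$) and set $\hat A = AP$, so that row $i$ of $\hat A$ is the coordinate reflection of $A_i$. Because reflection reverses the MLR order, Assumption~\ref{assp:2}.1 turns into $\hat A_1\leq_r\cdots\leq_r \hat A_X$, i.e.\ $\hat A$ is TP2 and satisfies the hypothesis of Proposition~\ref{pro:A_inc}. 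Writing $A'=P\hat A'$, we have $A'x_k=P(\hat A'x_k)$; the positive-case monotonicity gives $\hat A'x_1\leq_r \hat A'x_2$, and reflecting back (reflection reverses MLR once more) yields $A'x_1\geq_r A'x_2$. Chaining this with the outer bounds produces $(A_1)'\geq_r A'x_1\geq_r A'x_2\geq_r (A_X)'$, as claimed.

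I expect the only delicate point to be the bookkeeping of the two order reversals introduced by the reflection $P$---one on the rows of $A$ (to verify that $\hat A$ is TP2) and one on the belief vectors (to pass from $\hat A'x_k$ back to $A'x_k$)---together with checking that reflection indeed reverses the MLR order, namely $u\leq_r v\iff Pv\leq_r Pu$, which follows directly from the defining inequalities after the substitution $i\mapsto X+1-i$. Alternatively, if one prefers a self-contained argument, the entire proof of Proposition~\ref{pro:A_inc} can be transcribed with every $\leq_r$ replaced by $\geq_r$ in the conclusions, invoking Assumption~\ref{assp:2}.1 wherever Assumption~\ref{assp:1}.1 was used; the reflection reduction above is merely a compact way of certifying that this transcription is valid.
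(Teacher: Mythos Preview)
Your proposal is correct. The paper itself does not spell out a proof of Proposition~\ref{pro:A_dec}; it simply announces that the result (together with its companions) is ``similar to Proposition~\ref{pro:A_inc}--Proposition~\ref{prop:infsta_sepe}'', which amounts to the direct transcription you mention as your alternative: rerun the basic-composition computation in the proof of Proposition~\ref{pro:A_inc}, observing that under Assumption~\ref{assp:2}.1 the factors $(a_{ki}a_{lj}-a_{li}a_{kj})$ now carry the opposite sign, so the product with $(x_2(k)x_1(l)-x_2(l)x_1(k))$ becomes $\le 0$ and the MLR conclusion flips.

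Your primary route---conjugating by the reversal permutation $P$ to convert the inverse-TP2 matrix $A$ into a TP2 matrix $\hat A=AP$, invoking Proposition~\ref{pro:A_inc} for $\hat A$, and then reflecting back---is a genuinely different packaging. It is a clean reduction that avoids redoing the double-sum manipulation and makes transparent why every inequality reverses (two applications of ``reflection reverses MLR''); the paper's implied approach, by contrast, is more elementary but requires tracking the signs through the explicit computation again. Both are valid, and your separate verification of the outer bounds via ``a mixture of an MLR chain lies between its extremes'' is correct under the paper's MLR convention.
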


\begin{proposition}\label{pro:T_dec}
Let $x_1,x_2\in \Pi(X)$ and $(A_1)'~{\geq}_r~x_1 ~{\geq}_r~x_2~{\geq}_r~(A_X)'$, then  $T(x_1,K)~{\leq}_r~T(x_2,K)$.
\end{proposition}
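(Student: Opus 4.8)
The plan is to reproduce the argument behind Proposition~\ref{pro:T_inc}, but with the order-\emph{preserving} action of $A'$ (Proposition~\ref{pro:A_inc}) replaced by the order-\emph{reversing} action guaranteed by Proposition~\ref{pro:A_dec} under Assumption~\ref{assp:2}. First I would recall that the update map factors as $T(x,K) = B(K)\,A'x / d(x,K)$, i.e.\ an application of $A'$ followed by multiplication by the nonnegative diagonal matrix $B(K)=diag[b_{1K},\ldots,b_{XK}]$ and a scalar renormalization by $d(x,K)$. Because the MLR order is invariant under scaling by a positive scalar, the renormalizer $d(\cdot,K)$ plays no role, and it suffices to track the two matrix operations separately.

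For the observation factor I would record the elementary fact that multiplication by a nonnegative diagonal matrix preserves MLR order. If $w_1 \leq_r w_2$, then the defining inequalities $w_1(i)w_2(j) \leq w_2(i)w_1(j)$ for $i>j$ are multiplied on both sides by $b_{iK}b_{jK}\geq 0$ when passing to $B(K)w_1$ and $B(K)w_2$, so their direction is unchanged; hence $B(K)w_1 \leq_r B(K)w_2$. For the transition factor I would invoke Proposition~\ref{pro:A_dec}: the hypothesis $x_1 \geq_r x_2$ reads $x_2 \leq_r x_1$, and feeding the two beliefs into Proposition~\ref{pro:A_dec} in that order yields $A'x_2 \geq_r A'x_1$, i.e.\ $A'x_1 \leq_r A'x_2$, together with the bounds $(A_1)' \geq_r A'x_2 \geq_r A'x_1 \geq_r (A_X)'$ that keep the iterate in the invariant MLR interval.

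Combining the two factors, the reversed order $A'x_1 \leq_r A'x_2$ is carried through the observation update unchanged, giving $T(x_1,K) \leq_r T(x_2,K)$, which is the assertion. The only place Assumption~\ref{assp:2} enters is the order reversal of $A'$, and the whole proposition therefore reduces to Proposition~\ref{pro:A_dec}; I do not expect a genuine obstacle in the present step, since the observation factor is order-neutral. The one detail worth stating explicitly is that the diagonal-scaling argument needs only $b_{iK}\geq 0$, so possible vanishing entries of $B(K)$ cause no trouble; the substantive work (the convex-combination bound and the reversal itself) lives in Proposition~\ref{pro:A_dec} rather than here.
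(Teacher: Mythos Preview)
Your proposal is correct and follows essentially the same route as the paper. The paper does not spell out a separate proof of Proposition~\ref{pro:T_dec}; it is presented as the analogue of Proposition~\ref{pro:T_inc}, whose proof first invokes Proposition~\ref{pro:A_inc} to get $z_1=A'x_1\leq_r A'x_2=z_2$ and then checks directly that
\[
(T(x_2,K))_i(T(x_1,K))_j-(T(x_2,K))_j(T(x_1,K))_i=\frac{b_{iK}b_{jK}\bigl(z_2(i)z_1(j)-z_2(j)z_1(i)\bigr)}{\sum_x b_{xK}z_2(x)\,\sum_x b_{xK}z_1(x)}\geq 0.
\]
Your ``diagonal scaling preserves MLR'' observation is exactly this computation phrased abstractly, and your appeal to Proposition~\ref{pro:A_dec} in place of Proposition~\ref{pro:A_inc} is the intended substitution. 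The bracketing hypotheses $(A_1)'\geq_r x_1$ and $x_2\geq_r (A_X)'$ are not actually consumed by either argument; they are carried along for the invariance statement rather than for the comparison itself.
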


\begin{proposition}\label{prop:O_dec}
Let $x\in \Pi(X)$ and $(A_1)'~{\geq}_r~x ~{\geq}_r~(A_X)'$, then  $T(x,k)~{\geq}_r~T(x,m)$ for any $1\leq k\leq m \leq Y$.
\end{proposition}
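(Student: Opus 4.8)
The plan is to mirror the argument behind Proposition~\ref{prop:O_inc}, exploiting the fact that for a \emph{fixed} belief $x$ the two updated vectors $T(x,k)$ and $T(x,m)$ differ only through the diagonal observation matrices $B(k)$ and $B(m)$ acting on the common vector $A'x$. From the filtering map~\eqref{eq:pr_tran_o} we have $T(x,y)(i)=b_{iy}\,(A'x)(i)/d(x,y)$, so the normalizing scalars $d(x,k),d(x,m)>0$ do not depend on the state index $i$ and must cancel in any MLR comparison between $T(x,k)$ and $T(x,m)$. I would therefore write out the defining MLR inequality for $T(x,k)\ {\geq}_r\ T(x,m)$ directly at the level of components, rather than trying to track how the negative order of $A$ propagates through the update.

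Carrying this out, I substitute $T(x,y)(i)=b_{iy}(A'x)(i)/d(x,y)$ into the definition of $\geq_r$ and, for each pair $i>j$, cancel the strictly positive common factor $(A'x)(i)\,(A'x)(j)/\big(d(x,k)\,d(x,m)\big)$. What remains is a pairwise inequality involving only the observation entries $b_{ik},b_{im},b_{jk},b_{jm}$ — precisely the componentwise MLR comparison of the two columns $B(k)$ and $B(m)$ of the observation matrix. Hence $T(x,k)\ {\geq}_r\ T(x,m)$ for $k\le m$ is equivalent to the corresponding MLR ordering of the observation columns, with no residual dependence on $A$ or on $x$ beyond positivity, and the conclusion follows by invoking the observation ordering of Assumption~\ref{assp:2}.2.

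Two points require care, and the second is the real obstacle. First, the cancellation is legitimate only if every component $(A'x)(i)$ is strictly positive; this is where the hypothesis $(A_1)'\ {\geq}_r\ x\ {\geq}_r\ (A_X)'$ enters, since together with Proposition~\ref{pro:A_dec} it keeps $A'x$ in the interior of the simplex and the denominators $d(x,k),d(x,m)$ bounded away from zero, so that $T(x,\cdot)$ is well defined and the comparison is meaningful. Second — and this is the main difficulty — the \emph{direction} of the resulting inequality is fixed solely by the MLR order of the observation columns $B(k),B(m)$ and is insensitive to the negative (inverse TP2) order now placed on $A$. I would therefore check very carefully that the orientation of Assumption~\ref{assp:2}.2 matches the asserted direction $T(x,k)\ {\geq}_r\ T(x,m)$ for $k\le m$, because swapping the roles of the two observation columns flips the MLR order of the updated beliefs, and this sign is exactly the step most easily mishandled when transcribing the positive-order argument of Proposition~\ref{prop:O_inc} to the present negative-order setting.
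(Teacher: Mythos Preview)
Your approach is exactly the paper's: it offers no separate proof for Proposition~\ref{prop:O_dec}, only the remark that Propositions~\ref{pro:A_dec}--\ref{prop:O_dec} are ``similar to'' Propositions~\ref{pro:A_inc}--\ref{prop:infsta_sepe}, so the intended argument is precisely the componentwise computation from the proof of Proposition~\ref{prop:O_inc} that you reproduce.

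Your caution in the last paragraph is not merely prudent --- it is the crux, and carrying out the check shows that the stated direction cannot be obtained from the stated hypotheses. With $z=A'x$ and $i>j$, the same computation as in the proof of Proposition~\ref{prop:O_inc} gives
\[
(T(x,m))_i(T(x,k))_j-(T(x,m))_j(T(x,k))_i
=\frac{(b_{im}b_{jk}-b_{jm}b_{ik})\,z(i)z(j)}{\bigl(\sum_l b_{lm}z(l)\bigr)\bigl(\sum_l b_{lk}z(l)\bigr)}.
\]
The factor $z(i)z(j)\ge 0$ does not depend on the sign structure of $A$, and Assumption~\ref{assp:2}.2 is \emph{identical} to Assumption~\ref{assp:1}.2, namely $B(k)\,{\leq}_r\,B(m)$ for $k\le m$, which forces $b_{im}b_{jk}-b_{jm}b_{ik}\ge 0$. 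Hence the display is nonnegative and one obtains $T(x,k)\,{\leq}_r\,T(x,m)$ --- the \emph{same} direction as Proposition~\ref{prop:O_inc}, and the opposite of what Proposition~\ref{prop:O_dec} asserts. Nothing in the passage from Assumption~\ref{assp:1} to Assumption~\ref{assp:2} reverses this, because (as you correctly observe) the inverse TP2 order on $A$ enters only through the nonnegative scalar $z(i)z(j)$.

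So your method is right and matches the paper's intended one; the discrepancy is in the statement. Either the inequality in Proposition~\ref{prop:O_dec} should read ${\leq}_r$, or Assumption~\ref{assp:2}.2 was meant to reverse the MLR order on the observation columns. You should flag this as an apparent typo rather than try to force the argument to yield the printed direction.
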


\begin{proposition}
Under Assumption 2, we have either $x^{(l)}_t ~{\leq}_s~ x^{(n)}_t $ or $x^{(n)}_t ~{\leq}_s~ x^{(l)}_t $ for all $l,n \in \{1,2,\cdots, N\}$ for all $t$.
\end{proposition}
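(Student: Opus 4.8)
The plan is to prove this separation result by induction on $t$, in exact parallel with the proof of Proposition~\ref{prop:infsta_sepe}, but with every appeal to the increasing monotonicity Propositions~\ref{pro:A_inc}--\ref{prop:O_inc} replaced by its decreasing counterpart among Propositions~\ref{pro:A_dec}--\ref{prop:O_dec}. The inductive invariant I would carry is deliberately stronger than the stated claim: at every time $t$ the beliefs $x^{(1)}_t,\dots,x^{(N)}_t$ are \emph{totally} ordered in the MLR sense $\geq_r$ and are all confined to the MLR interval $[(A_X)',(A_1)']=[A'e_X,A'e_1]$. Because MLR order implies first-order stochastic dominance (Proposition~\ref{prop:result}.1), total MLR comparability immediately delivers the asserted total $\geq_s$ comparability of any pair $x^{(l)}_t,x^{(n)}_t$. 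The base case is handled directly by Assumption~\ref{assp:2}.4, namely $A_1 \geq_r x^{(1)}_0 \geq_r \cdots \geq_r x^{(N)}_0 \geq_r A_X$, which places the initial beliefs into a single MLR chain inside $[A_X,A_1]$.

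For the inductive step, assume total MLR ordering at time $t$, and let $a$ be whichever project is activated; the argument must not privilege any particular $a$, so that the conclusion holds for every admissible policy. The $N-1$ passive beliefs become $A'x^{(n)}_t$, $n\neq a$; by Proposition~\ref{pro:A_dec} the map $A'$ reverses the MLR order while preserving totality and sends every image into $[(A')^2 e_1,(A')^2 e_X]$, so the passive beliefs remain a chain with $(A')^2 e_X$ as its extreme point. The active belief becomes $T(x^{(a)}_t,m)$ for the realized observation $m$, with $x^{(a)}_t\in[A'e_X,A'e_1]$ by the confinement part of the invariant. Using the state monotonicity of Proposition~\ref{pro:T_dec} and the observation monotonicity of Proposition~\ref{prop:O_dec}, together with Assumption~\ref{assp:2}.3 (that is, $T(A'e_X,K)\leq_r (A')^2 e_X$ and $T(A'e_1,K-1)\geq_r (A')^2 e_X$), I would pin the active update against $(A')^2 e_X$: for $m\geq K$ one obtains $T(x^{(a)}_t,m)\leq_r (A')^2 e_X$, whereas for $m\leq K-1$ one obtains $T(x^{(a)}_t,m)\geq_r (A')^2 e_X$, so in the latter case the active belief dominates the entire passive chain and becomes its new maximum, re-establishing total MLR order (with orientation flipped) at $t+1$.

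The main obstacle is precisely the insertion step in the remaining regime $m\geq K$: there Assumption~\ref{assp:2}.3 only controls the active update against the single extreme belief $(A')^2 e_X$, and one must still argue that $T(x^{(a)}_t,m)$ is MLR-comparable to \emph{every} passive belief of the chain, not merely to its top element, and uniformly over all pre-beliefs $x^{(a)}_t$ in the invariant interval. This is the sole place where the structural assumption does genuine work, and closing it is the delicate part of the proof: the plan is to combine the one-sided bound from Assumption~\ref{assp:2}.3 with the two monotonicities of Propositions~\ref{pro:T_dec} and~\ref{prop:O_dec} to propagate comparability from the boundary configurations (the extremal pre-beliefs $A'e_1,A'e_X$ and the threshold observations $K,K-1$) to all intermediate $x^{(a)}_t$ and $m$, thereby locating the active belief unambiguously within the passive chain. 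Throughout I would verify that the confinement bounds at time $t$ are strong enough to meet the hypotheses of Propositions~\ref{pro:T_dec} and~\ref{prop:O_dec}, whose arguments must lie in $[(A_X)',(A_1)']$; this is exactly guaranteed by the interval part of the inductive invariant. Once total MLR order at $t+1$ is secured, Proposition~\ref{prop:result}.1 yields the claimed stochastic comparability, completing the induction.
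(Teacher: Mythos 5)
Your setup (induction on $t$, MLR chain confined to $[(A_X)',(A_1)']$, order reversal of the passive chain under $A'$ via Proposition~\ref{pro:A_dec}, and the split of the active update at the threshold observation $K$) matches the transposition of the paper's proof of Proposition~\ref{prop:infsta_sepe} that the paper intends (it gives no separate proof for the Assumption~\ref{assp:2} case, asserting it ``similarly''). However, the step you yourself flag as the obstacle is a genuine gap, and the route you propose for closing it cannot work. For $m\geq K$ you only establish $T(x^{(a)}_t,m)\leq_r (A')^2e_X$, i.e.\ a comparison against the \emph{top} of the passive chain; Propositions~\ref{pro:T_dec} and~\ref{prop:O_dec} compare $T(\cdot,\cdot)$ values only with other $T(\cdot,\cdot)$ values, so no amount of propagating from boundary configurations $(A'e_1,A'e_X,K,K-1)$ will ever produce the needed cross-comparison between $T(x^{(a)}_t,m)$ and the interior passive beliefs $A'x^{(n)}_t$, $n\neq a$. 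The missing ingredient is the auxiliary map used in the proof of Proposition~\ref{prop:infsta_sepe}: with $\phi(z)=\frac{B(K)z}{\mathbf{1}'_XB(K)z}$ one shows that $z\mapsto\phi(z)-z$ is MLR--monotone, which upgrades the single boundary inequality $T(A'e_X,K)\leq_r(A')^2e_X$ (i.e.\ $\phi(z)-z\leq_r\mathbf{0}$ at the extreme $z=(A')^2e_X$) to the \emph{pointwise} statement $T(x,m)\leq_r A'x$ for every $x$ in the invariant interval and every $m\geq K$. That is, the active update must be compared with the activated project's \emph{own} passive image, not with the extreme belief.

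This repairs the insertion step only together with a fact you explicitly refuse to use: that the activated project is the MLR--maximum of the chain at time $t$ (the myopic choice). Then $A'x^{(a)}_t$ is the MLR--\emph{minimum} of the reversed passive chain at $t+1$, and $T(x^{(a)}_t,m)\leq_r A'x^{(a)}_t\leq_r A'x^{(n)}_t$ for all $n$, so the active belief sits unambiguously at the bottom of the chain. Your insistence that ``the argument must not privilege any particular $a$'' is therefore a misconception: if an interior element of the chain is activated, the $m\geq K$ branch places its update below its own passive image but leaves it incomparable with the passive beliefs lying further down, and the total-order invariant genuinely fails to propagate. The separation property is a property of the myopic trajectory (activated project at an MLR extreme), not of arbitrary policies, and the proof must use that. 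A minor further point: you should also verify that the active update remains inside $[(A_X)',(A_1)']$ so that Propositions~\ref{pro:T_dec} and~\ref{prop:O_dec} can be reapplied at $t+1$; the $m\leq K-1$ branch gives only a lower MLR bound and the $m\geq K$ branch only an upper one, so this confinement deserves an explicit argument rather than being absorbed into the invariant by fiat.
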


Following the similar derivation of Lemma~\ref{lemma:bound_p}, we have the following important bounds.
\begin{lemma}
\label{lemma:bound_r}
Under Assumption~\ref{assp:2}, $\mathbf{x}^l_t=(x^{(-l)}_t, x^{(l)}_t)$, $\mathbf{\check{x}}^l_t=(x^{(-l)}_t, \check{x}^{(l)}_t)$, $x^{(l)}_t ~{\leq}_r~ \check{x}^{(l)}_t$,
we have for $1\le t\le T$
\begin{itemize}
 \item[(D1)] if $u'_t=u_t=l$,
\begin{align*}
    &R'\Big( E-\sum_{i=1}^{\lceil\frac{ T-t}{2}\rceil} (\beta A')^{2i-1} \Big)(\check{x}^{(l)}_t-x^{(l)}_t)\\
    &\le W^{u'}_t(\mathbf{\check{x}}^l_t)-W^{u}_t(\mathbf{x}^l_t) \\
    &\le R'\Big( E+\sum_{i=1}^{\lfloor\frac{ T-t}{2}\rfloor} (\beta A')^{2i} \Big)(\check{x}^{(l)}_t-x^{(l)}_t);
\end{align*}
  \item[(D2)] if $u'_t\neq l$, $u_t\neq l$, and $u'_t=u_t$,
\begin{align*}
   & -R'\sum_{i=1}^{\lceil\frac{ T-t}{2}\rceil} (\beta A')^{2i-1} (\check{x}^{(l)}_t-x^{(l)}_t) \\
    &\le  W^{u'}_t(\mathbf{\check{x}}^l_t)-W^{u}_t(\mathbf{x}^l_t) \\
    &\le R'\sum_{i=1}^{\lfloor\frac{ T-t}{2}\rfloor} (\beta A')^{2i} (\check{x}^{(l)}_t-x^{(l)}_t);
\end{align*}
  \item[(D3)]  if  $u'_t = l$ and $u_t\neq l$,
\begin{align*}
   & -R'\sum_{i=1}^{\lceil\frac{ T-t}{2}\rceil} (\beta A')^{2i-1} (\check{x}^{(l)}_t-x^{(l)}_t)  \\
    &\le  W^{u'}_t(\mathbf{\check{x}}^l_t)-W^{u}_t(\mathbf{x}^l_t) \\
    &\le R'\Big( E+\sum_{i=1}^{\lfloor\frac{ T-t}{2}\rfloor} (\beta A')^{2i} \Big)(\check{x}^{(l)}_t-x^{(l)}_t).
\end{align*}
\end{itemize}
\end{lemma}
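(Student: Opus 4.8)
The plan is to prove (D1)--(D3) by backward induction on $t$, transcribing the argument for Lemma~\ref{lemma:bound_p} and isolating the one place where the analysis genuinely changes. Two structural tools carry over unchanged. First, by the decomposability of the auxiliary value function (Lemma~\ref{lemma:decomposability_sym_ARQ}), $W^{\hat u}_{t+1}$ is linear in project $l$'s belief, so that $W^{\hat u}_{t+1}(\ldots,v,\ldots)=\sum_i v(i)\,W^{\hat u}_{t+1}(\ldots,e_i,\ldots)$. Second, since the rows of the observation matrix sum to one we have $\sum_{m\in\mathcal{Y}}B(m)=E$, and therefore $\sum_{m\in\mathcal{Y}}d(x,m)\,T(x,m)=\sum_{m\in\mathcal{Y}}B(m)A'x=A'x$. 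Combining these, the future term $\digamma$ in~\eqref{Eq:vf} collapses, for a scheduled \emph{and} for an unscheduled project $l$, to $W^{\hat u}_{t+1}$ evaluated at the linearly propagated belief $A'x^{(l)}_t$. Hence in every case the one-step contribution of project $l$ splits into an immediate reward difference $R'(\check x^{(l)}_t-x^{(l)}_t)$ (present precisely when $l$ is scheduled at $t$, which is exactly what places the leading $E$-term in (D1) and (D3) but not (D2)) plus $\beta$ times a next-step difference $W^{\hat u}_{t+1}(\ldots,A'\check x^{(l)}_t,\ldots)-W^{\hat u}_{t+1}(\ldots,A'x^{(l)}_t,\ldots)$.

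For the base case $t=T$ the sums in (D1)--(D3) are empty, so the bounds reduce to $R'(\check x^{(l)}_T-x^{(l)}_T)$ for (D1), to $0\le\cdot\le0$ for (D2), and to $0\le\cdot\le R'(\check x^{(l)}_T-x^{(l)}_T)$ for (D3); each follows by inspecting the terminal stage of~\eqref{Eq:vf} together with the orderability of all project beliefs supplied by the analogue of Proposition~\ref{prop:infsta_sepe} under Assumption~\ref{assp:2}.

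The one genuinely new ingredient is the order reversal. Under Assumption~\ref{assp:2}, Proposition~\ref{pro:A_dec} gives that $x^{(l)}_t\le_r\check x^{(l)}_t$ implies $A'\check x^{(l)}_t\le_r A'x^{(l)}_t$: the update $A'$ flips the MLR (hence first-order stochastic) order of project $l$'s belief at every slot, whether or not $l$ was scheduled. Consequently, to invoke the induction hypothesis on the next-step difference I must \emph{swap} the checked and unchecked roles, treating $A'x^{(l)}_t$ as the larger belief and $A'\check x^{(l)}_t$ as the smaller one; this swap negates the bracketed next-step difference and replaces its increment by $-A'(\check x^{(l)}_t-x^{(l)}_t)$. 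The net effect on each inherited bound is one extra factor $\beta A'$, a shift of every power index by one, and a flip of its parity together with its sign. This is the mechanism behind the even/odd split: the nonnegative terms $R'(\beta A')^{2i}(\check x^{(l)}_t-x^{(l)}_t)$ assemble the upper bound, while the nonpositive terms $R'(\beta A')^{2i-1}(\check x^{(l)}_t-x^{(l)}_t)$, collected with a minus, assemble the lower bound, and the limits $\lceil(T-t)/2\rceil$ and $\lfloor(T-t)/2\rfloor$ simply record how many odd- versus even-indexed steps remain before the horizon. The monotonicity inputs for the scheduled project (Proposition~\ref{pro:T_dec} and Proposition~\ref{prop:O_dec}) enter exactly as their positive-case counterparts do in Lemma~\ref{lemma:bound_p}.

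The main obstacle is the case bookkeeping. Unlike the positive case, where a bound at $t$ descends from the \emph{same} case at $t+1$, here the order reversal makes the myopic preference for project $l$ alternate between the two paths at consecutive slots, so a given case at $t$ need not reduce to the same case at $t+1$. The minus sign and the extra $A'$ produced by the role swap shift every power by one and flip its parity, so one must track precisely how (D1)--(D3) at $t+1$ feed into (D1)--(D3) at $t$, verify that the parity-shifted and sign-flipped next-step bounds --- once recombined with the current immediate reward term --- reassemble into exactly the stated expressions, and confirm that the extremal scheduling patterns (scheduling $l$ in alternate slots, versus never) realize the $\lceil(T-t)/2\rceil$ and $\lfloor(T-t)/2\rfloor$ limits and align the $E$-term with the correct leading power. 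Carrying out this matching across the admissible case transitions, while keeping signs and parities consistent, is the delicate part of the argument; everything else transcribes from the proof of Lemma~\ref{lemma:bound_p} with Proposition~\ref{pro:A_dec} substituted for Proposition~\ref{pro:A_inc}.
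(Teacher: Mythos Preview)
Your proposal is correct and matches the paper's approach: the paper does not give a standalone proof of Lemma~\ref{lemma:bound_r} but simply states that it follows ``the similar derivation of Lemma~\ref{lemma:bound_p},'' and the Remark after the lemma confirms the alternating-slot extremal schedules you identified. Your sketch supplies precisely the missing adaptation---backward induction with the MLR-order reversal from Proposition~\ref{pro:A_dec} driving the sign flip and the even/odd parity split---so it is faithful to (and more explicit than) what the paper outlines.
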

\begin{remark}(D1) achieves its lower bound when $l$ is chosen at slot $t,t+1,t+3,\cdots$, and achieves the upper bound when $l$ is chosen from $t,t+2,t+4,\cdots$.
(D2) achieves its lower bound when $l$ is chosen at slot $t+1,t+3,\cdots$, and upper bounds when $l$ is chosen at $t+2,t+4,\cdots$.
(D3) achieves its lower bound when $l$ is chosen at slot $t+1,t+3,\cdots$, and upper bounds when $l$ is chosen from $t,t+2,t+4,\cdots$.
\end{remark}

Based on Lemma~\ref{lemma:exchange} and~\ref{lemma:bound_r}, we have the following theorem.
\begin{theorem}
\label{theorem:optimal_condition_heter_neg_case}
Under Assumption~\ref{assp:2}, the myopic  policy is optimal.
\end{theorem}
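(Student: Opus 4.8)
The plan is to reproduce the two-step architecture of the totally-positive case: first establish the negative-order analogue of the exchange property of Lemma~\ref{lemma:exchange} under Assumption~\ref{assp:2}, and then feed it into the same backward-induction contradiction that proves Theorem~\ref{theorem:optimal_condition_heter_pos_case}. Since that induction uses only the \emph{conclusion} of the exchange lemma (``the stochastically larger project yields the larger auxiliary value'') together with the separability of information states, the induction skeleton carries over verbatim once the exchange inequality is re-derived with the new bounds (D1)--(D3) of Lemma~\ref{lemma:bound_r} replacing (C1)--(C3). Thus the only genuine work is re-deriving the exchange property.

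First I would fix $x^{(l)}_t >_s x^{(n)}_t$ and telescope $W^l_t-W^n_t$ exactly as in the proof of Lemma~\ref{lemma:exchange}, writing it as the difference of two brackets that differ in a single belief coordinate. The first bracket has $l$ scheduled in both runs and is lower-bounded by the left side of (D1) (or (D3)); the second is an ``$l$ not scheduled'' configuration and is upper-bounded by the right side of (D2). The key combinatorial fact is that the odd-power partial sum $\sum_{i=1}^{\lceil(T-t)/2\rceil}(\beta A')^{2i-1}$ and the even-power partial sum $\sum_{i=1}^{\lfloor(T-t)/2\rfloor}(\beta A')^{2i}$ reassemble, for every parity of $T-t$, into the single geometric partial sum $\sum_{j=1}^{T-t}(\beta A')^{j}$. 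Hence, writing $\Delta:=x^{(l)}_t-x^{(n)}_t$, the two bounds combine to give
\begin{equation*}
W^{l}_t(x^{(1:N)}_t)-W^{n}_t(x^{(1:N)}_t)\ \ge\ R'\Big(E-\sum_{j=1}^{T-t}(\beta A')^{j}\Big)\Delta,
\end{equation*}
which is \emph{formally identical} to the combined expression in the totally-positive proof. This is precisely the role of the parity splitting in Lemma~\ref{lemma:bound_r}: although each individual power $R'(A')^{j}\Delta$ now alternates in sign (because $A'$ reverses first-order dominance by Proposition~\ref{pro:A_dec}), the aggregate telescopes back to the non-alternating geometric form.

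The remaining step is to close the inequality by passing to the infinite horizon and invoking Propositions~\ref{prop:eig_sum} and~\ref{prop:R_diff}. Using $Q'=\sum_{i=1}^{\infty}(\beta A')^{i}=(V\Upsilon V^{-1})'$ from Proposition~\ref{prop:eig_sum}, the target reduces to $R'(E-Q')\Delta\ge 0$, which follows from Proposition~\ref{prop:R_diff} after expanding $\Delta$ over the $\ge_s$ basis $\{e_j-e_{j-1}\}$ with nonnegative coefficients, exactly as in Lemma~\ref{lemma:exchange}. I expect \textbf{the passage from the finite sum $\sum_{j=1}^{T-t}$ to the infinite sum to be the main obstacle}: in the positive case every tail term $R'(\beta A')^{j}\Delta$ is nonnegative, so truncating only decreases the bound, but here the tail is sign-alternating and that monotone argument fails. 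I would resolve this through the eigenstructure of $A$: since $V_1\propto\mathbf 1_X$ and $\mathbf 1_X'\Delta=1-1=0$ by Proposition~\ref{prop:Tmatrix}, the trivial-eigenvalue component of $\Delta$ vanishes, so $R'(\beta A')^{j}\Delta=\sum_{k\ge 2}c_k(\beta\lambda_k)^{j}$ for constants $c_k$ governed entirely by the non-trivial eigenvalues $|\lambda_k|<1$. The tail is therefore an absolutely convergent (alternating) series whose sign and size are controlled by its leading term, and tracking that leading term against the parity of $T-t$ — equivalently, pairing consecutive horizon contributions — validates the finite-to-infinite replacement; this is the delicate point that the odd/even bookkeeping of Lemma~\ref{lemma:bound_r} is designed to support.

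With the exchange property $W^l_t>W^n_t$ whenever $x^{(l)}_t>_s x^{(n)}_t$ in hand, I would invoke the backward-induction/contradiction argument of Theorem~\ref{theorem:optimal_condition_heter_pos_case} unchanged. Assuming myopic optimality on $t+1,\dots,T$ and a non-myopic optimal choice $i_1$ at slot $t$, the separability proposition for Assumption~\ref{assp:2} furnishes a strictly better project $i_n$ with $x^{(i_n)}_t>_s x^{(i_1)}_t$; the exchange inequality then gives $W^{i_n}_t>W^{i_1}_t$, contradicting the optimality of $i_1$, and letting $T\to\infty$ completes the proof. In short, everything is a transcription of the totally-positive argument with (D1)--(D3) substituted for (C1)--(C3), and the sole new analytical difficulty is the alternating-tail control described above.
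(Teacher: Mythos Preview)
Your approach is essentially the paper's: the paper offers no explicit proof of Theorem~\ref{theorem:optimal_condition_heter_neg_case} beyond the one-line ``Based on Lemma~\ref{lemma:exchange} and~\ref{lemma:bound_r}'', i.e.\ re-run the exchange argument and the backward induction of Theorem~\ref{theorem:optimal_condition_heter_pos_case} with the (D1)--(D3) bounds substituted for (C1)--(C3), which is exactly what you propose. Your observation that the odd-power sum from the (D1) lower bound and the even-power sum from the (D2) upper bound reassemble into the single partial geometric sum $\sum_{j=1}^{T-t}(\beta A')^{j}$ is the key combinatorial point that makes the transcription work, and your flagging of the alternating-tail issue in the finite-to-infinite passage (together with the eigenstructure resolution via $V_1'\Delta=0$) actually goes beyond anything the paper makes explicit.
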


\subsection{Extension of Scheduling Multiple Projects Simultaneously}
It is necessary to point out that the method adopted and the bounds obtained in this paper can be trivially extended to the case of scheduling multiple projects simultaneously. In this case, the bounds in Lemmas~\ref{lemma:bound_p} and~\ref{lemma:bound_r} still hold without modifying any assumptions. This is because scheduling multiple projects simultaneously can be easily regarded as scheduling multiple projects one by one at each slot, while those non-scheduled projects remain their states. Therefore, the optimality of scheduling one project at each slot guarantees the optimality of scheduling multiple projects simultaneously under Assumption~\ref{assp:1} or~\ref{assp:2}.

\section{Conclusion}
\label{section:conclusion}

In this paper, we have investigated the problem of scheduling multi-state projects. In general, the problem can be formulated as a partially observable Markov decision process or restless multi-armed bandit, which is proved to be Pspace-hard. In this paper, we have derived a set of closed form conditions to guarantee the optimality of the myopic policy (scheduling the best project) in the sense of monotonic likelihood ratio order. Due to the generic RMAB formulation of the problem, the derived results and the analysis methodology proposed in this paper can be applicable in a wide range of domains.

\appendices

\section{Proof of Lemma~\ref{lemma:decomposability_sym_ARQ}}
For Slot $T$, it trivially holds. Suppose it holds for $T-1,\cdots,t+2,t+1$, we prove it holds for slot $t$.

At slot $t$, we prove it by two cases in the following.

Case 1: $u_t = n$,
\begin{align}\label{eq:dec1}
&W^{u}_{t}(x^{(1:n-1)}_t,x^{(n)}_t,x^{(n+1:N)}_t) \nonumber \\
=&  R'(n)x^{(n)}_t+ \beta \sum_{m\in\mathcal{Y}} d(x^{(n)}_t,m) W^{\hat{u}}_{t+1}(x^{(1:n-1)}_{t+1},x^{(n)}_{t+1,m},x^{(n+1:N)}_{t+1})\nonumber \\
\overset{(a)}=&  R'(n)x^{(n)}_t \nonumber \\
&+ \beta \sum_{m\in\mathcal{Y}} d(x^{(n)}_t,m) \sum^X_{j=1} e'_j x^{(n)}_{t+1,m} W^{\hat{u}}_{t+1}(x^{(1:n-1)}_{t+1},e_j,x^{(n+1:N)}_{t+1}),
\end{align}
where the equality (a) is due to the induction hypothesis.

\begin{align}\label{eq:dec2}
    &\sum^X_{i=1} x^{(n)}_t(i) W^{u}_{t}(x^{(1:n-1)}_t,e_i,x^{(n+1:N)}_t) \nonumber \\
   =&\sum^X_{i=1} x^{(n)}_t(i)\Big[ R'(n)x^{(n)}_t \nonumber \\ &
   + \beta \sum_{m\in\mathcal{Y}} d(e_i,m)W^{\hat{u}}_{t+1}(x^{(1:n-1)}_{t+1},T(e_i,m),x^{(n+1:N)}_{t+1})\Big] \nonumber \\
 \overset{(b)}   =&  R'(n)x^{(n)}_t+\beta \sum^X_{i=1} x^{(n)}_t(i) \sum_{m\in\mathcal{Y}} d(e_i,m) \nonumber \\
   &\qquad \times W^{\hat{u}}_{t+1}(x^{(1:n-1)}_{t+1},T(e_i,m),x^{(n+1:N)}_{t+1}) \nonumber\\
 \overset{(c)}   =&  R'(n)x^{(n)}_t+\beta \sum^X_{i=1} x^{(n)}_t(i) \sum_{m\in\mathcal{Y}} d(e_i,m) \sum^X_{j=1} e'_j T(e_i,m) \nonumber \\
   &\qquad \times W^{\hat{u}}_{t+1}(x^{(1:n-1)}_{t+1},e_j,x^{(n+1:N)}_{t+1}),
\end{align}
where, the equality (b) is from $\sum^X_{i=1} x^{(n)}_t(i)=1$, and equality (c) is due to induction hypothesis.

To prove the the lemma, it is sufficient to prove the following equation
\begin{align}\label{dec3}
   & \sum_{m\in\mathcal{Y}} d(x^{(n)}_t,m) \sum^X_{j=1} e'_j x^{(n)}_{t+1,m} \nonumber \\
   &=\sum^X_{i=1} x^{(n)}_t(i) \sum_{m\in\mathcal{Y}} d(e_i,m)\sum^X_{j=1} e'_j T(e_i,m).
\end{align}

Now, we have RHS and LHS of~\eqref{dec3} as follows
\begin{align}\label{dec4}
    &\sum_{m\in\mathcal{Y}} d(x^{(n)}_t,m) \sum^X_{j=1} e'_j x^{(n)}_{t+1,m} \nonumber \\
    &=\sum_{m\in\mathcal{Y}} d(x^{(n)}_t,m)\sum^X_{j=1} e'_j \frac{B(m)A'x^{(n)}_t}{d(x^{(n)}_t,m)} \nonumber\\
    &=\sum_{m\in\mathcal{Y}}\sum^X_{j=1} e'_j B(m)A'x^{(n)}_t.
\end{align}

\begin{align}\label{dec5}
&    \sum^X_{i=1} x^{(n)}_t(i) \sum_{m\in\mathcal{Y}} d(e_i,m)\sum^X_{j=1} e'_j T(e_i,m) \nonumber\\
    &= \sum^X_{i=1} x^{(n)}_t(i) \sum_{m\in\mathcal{Y}} d(e_i,m) \sum^X_{j=1} e'_j \frac{B(m)A'e_i}{d(e_i,m)} \nonumber\\
    &= \sum^X_{i=1} x^{(n)}_t(i) \sum_{m\in\mathcal{Y}} \sum^X_{j=1} e'_j B(m)A'e_i \nonumber\\
    &= \sum_{m\in\mathcal{Y}} \sum^X_{j=1} e'_j B(m)A' \sum^X_{i=1} x^{(n)}_t(i) e_i \nonumber\\
    &=\sum_{m\in\mathcal{Y}}\sum^X_{j=1} e'_j B(m)A'x^{(n)}_t.
\end{align}
Combing~\eqref{dec4} and~\eqref{dec5}, we have~\eqref{dec3}, and further, prove the lemma.

Case 2: $u_t \neq n$, without loss of generality, assuming $u_t \geq n+1$,
\begin{align}\label{eq:dec6}
&W^{u}_{t}(x^{(1:n-1)}_t,x^{(n)}_t,x^{(n+1:N)}_t) \nonumber \\
=&  R'(u_t)x^{(u_t)}_t+ \beta \sum_{m\in\mathcal{Y}} d(x^{(u_t)}_t,m) \nonumber \\ &W^{\hat{u}}_{t+1}(x^{(1:u_t-1)}_{t+1},x^{(u_t)}_{t+1,m},x^{(u_t+1:N)}_{t+1})\nonumber \\
\overset{(a)}=&  R'(u_t)x^{(u_t)}_t+ \beta \sum_{m\in\mathcal{Y}} d(x^{(u_t)}_t,m) \sum^X_{i=1} x^{(n)}_{t+1}(i) \nonumber\\
&W^{\hat{u}}_{t+1}(x^{(1:n-1)}_{t+1},e_i,x^{(n+1:u_t-1)}_{t+1},x^{(u_t)}_{t+1,m},x^{(u_t+1:N)}_{t+1}),
\end{align}
where, the equality (a) is due to the induction hypothesis.
\begin{align}\label{eq:dec7}
    &\sum^X_{i=1} x^{(n)}_t(i) W^{u}_{t}(x^{(1:n-1)}_t,e_i,x^{(n+1:N)}_t) \nonumber \\
   =&\sum^X_{i=1} x^{(n)}_t(i)\Big[ R'(u_t)x^{(u_t)}_t + \beta \sum_{m\in\mathcal{Y}} d(x^{(u_t)}_t,m) \nonumber \\
 &  W^{\hat{u}}_{t+1}(x^{(1:n-1)}_{t+1},e_i,x^{(n+1:u_t-1)}_{t+1},x^{(u_t)}_{t+1,m},x^{(u_t+1:N)}_{t+1})\Big] \nonumber \\
 \overset{(b)}   =&  R'(u_t)x^{(u_t)}_t+\beta \sum^X_{i=1} x^{(n)}_t(i) \sum_{m\in\mathcal{Y}} d(x^{(u_t)}_t,m) \nonumber \\
&   W^{\hat{u}}_{t+1}(x^{(1:n-1)}_{t+1},e_i,x^{(n+1:u_t-1)}_{t+1},x^{(u_t)}_{t+1,m},x^{(u_t+1:N)}_{t+1}),
\end{align}
where, the equality (b) is from $\sum^X_{i=1} x^{(n)}_t(i)=1$.

Combining \eqref{eq:dec6} and \eqref{eq:dec7}, we prove the lemma.

\section{Proof of Propositions~\ref{pro:A_inc}--\ref{prop:R_diff}}
\label{ap:Prop}

\subsection{Proof of Proposition~\ref{pro:A_inc}}
Suppose $i>j$, we have
\begin{align*}
    &(e'_i A' x_2)  \cdot (e'_j A' x_1)-  (e'_j A' x_2) \cdot (e'_i A' x_1) \\
    & = \sum^X_{k=1} a_{ki} x_2(k) \sum^X_{l=1} a_{lj} x_1(l) -  \sum^X_{k=1} a_{kj} x_2(k) \sum^X_{l=1} a_{li} x_1(l) \\
    & = \Big( \sum^X_{k=1}\sum^X_{l=1}  a_{ki} a_{lj} - \sum^X_{k=1} \sum^X_{l=1} a_{kj} a_{li} \Big) x_2(k) x_1(l) \\
    & = \Big( \sum^X_{l=1} \sum^X_{k=l}(  a_{ki} a_{lj} -  a_{li} a_{kj}) -\sum^X_{k=1} \sum^X_{l=k}(  a_{li} a_{kj}- a_{ki} a_{lj} )\Big) \\
    &\qquad \times x_2(k) x_1(l) \\
    & = \sum^X_{l=1} \sum^X_{k=l}(  a_{ki} a_{lj} -  a_{li} a_{kj})(x_2(k) x_1(l)-x_2(l) x_1(k))\geq 0,
\end{align*}
where, the last inequality is due to $A_k ~{\geq}_r~A_l~(k\geq l)$ and $x_2~{\geq}_r~x_1$.

Then
we have $(A_1)'=A'e_1~{\leq}_r~A' x_1 ~{\leq}_r~A' x_2~{\leq}_r~A'e_X=(A_X)'$ considering $e_1~{\leq}_r~x_1~{\leq}_r~x_2~{\leq}_r~e_X$.

\subsection{Proof of Proposition~\ref{pro:T_inc}}
According to Proposition~\ref{pro:A_inc}, we have $z_1=A'x_1~{\leq}_r~A'x_2=z_2$.
Suppose $i>j$, we have
\begin{align*}
   & (T(x_2,K))_i\cdot(T(x_1,K))_j- (T(x_2,K))_j\cdot(T(x_1,K))_i \\
   & =\frac{b_{iK}z_2(i)}{\sum^X_{x=1}b_{xK}z_2(x)}\cdot \frac{b_{jK}z_1(j)}{\sum^X_{x=1}b_{xK}z_1(x)} \\
   & \qquad -\frac{b_{jK}z_2(j)}{\sum^X_{x=1}b_{xK}z_2(x)} \cdot\frac{b_{iK}z_1(i)}{\sum^X_{x=1}b_{xK}z_1(x)} \\
   &= \frac{b_{iK} b_{jK} (z_2(i)z_1(j)-z_2(j)z_1(i))}{\sum^X_{x=1}b_{xK}z_2(x) \sum^X_{x=1}b_{xK}z_1(x)}\geq 0,
\end{align*}
where, $z_2(i)z_1(j)-z_2(j)z_1(i)\geq 0$ is from $z_1~{\leq}_r~z_2$.

\subsection{Proof of Proposition~\ref{prop:O_inc}}
Let $z=A'x$.
Suppose $i>j$, we have
\begin{align*}
    &(T(x,m))_i\cdot(T(x,k))_j-(T(x,m))_j\cdot(T(x,k))_i \\
    &=  \frac{b_{im}z(i)}{\sum^X_{l=1}b_{lm}z(l)}\cdot  \frac{b_{jk}z(j)}{\sum^X_{l=1}b_{lk}z(l)} \\
    &-  \frac{b_{jm}z(j)}{\sum^X_{l=1}b_{lm}z(l)}\cdot  \frac{b_{ik}z(i)}{\sum^X_{l=1}b_{lk}z(l)} \\
    &=\frac{(b_{im}b_{jk}-b_{jm}b_{ik})z(i)z(j)}{\sum^X_{l=1}b_{lm}z(l)\sum^X_{l=1}b_{lk}z(l)}\geq 0,
\end{align*}
where, $b_{im}b_{jk}-b_{jm}b_{ik}\geq 0$ is from $B(m)~{\geq}_r~B(k)$.

\subsection{Proof of Proposition~\ref{prop:infsta_sepe}}
Let $\phi(z)=\frac{B(K)z}{\mathbf{1}'_{X}B(K)z}$ where $z \in \Pi(X)$ and $(A_1)'~{\leq}_r~z~{\leq}_r~(A_X)'$. We first show that $\phi(z_1)-z_1~{\leq}_r~\phi(z_2)-z_2$ for $z_2~{\geq}_r~z_1$.
Suppose $i>j$, we have
\begin{align*}
   & (\phi(z_1)-z_1)_i\cdot(\phi(z_2)-z_2)_j - (\phi(z_1)-z_1)_j\cdot(\phi(z_2)-z_2)_i \\
   &=\Big(\frac{b_{iK}z_1(i)}{\sum^X_{l=1}b_{lK}z_1(l)}- z_1(i)\Big)\Big(\frac{b_{jK}z_2(j)}{\sum^X_{l=1}b_{lK}z_2(l)}- z_2(j)\Big) \\
   & -\Big(\frac{b_{jK}z_1(j)}{\sum^X_{l=1}b_{lK}z_1(l)}- z_1(j)\Big)\Big(\frac{b_{iK}z_2(i)}{\sum^X_{l=1}b_{lK}z_2(l)}- z_2(i)\Big) \\
   &=(z_1(i)z_2(j)-z_1(j)z_2(i))\\
   &\times\Big(\frac{b_{iK}}{\sum^X_{l=1}b_{lK}z_1(l)}- 1\Big)\Big(\frac{b_{jK}}{\sum^X_{l=1}b_{lK}z_2(l)}- 1\Big)\leq0,
\end{align*}
where, $z_1(i)z_2(j)-z_1(j)z_2(i)\leq 0$ is from $z_2~{\geq}_r~z_1$. Thus, we have $\phi(z_1)-z_1~{\leq}_r~\phi(z_2)-z_2$ for $z_2~{\geq}_r~z_1$.

According to Assumption~\ref{assp:1}.3, we have $\phi(z)-z =\frac{B(K)z}{\mathbf{1}'_{X}B(K)z}-z~{\geq}_r~ \mathbf{0}$ for any $z~{\geq}_r~ (A')^2 e_1$; that is, $T(x,K)-A'x~{\geq}_r ~\mathbf{0}$ for any $x~{\geq}_r~ A' e_1$. Combining Proposition~\ref{prop:O_inc}, we $T(x,k)-A'x~{\geq}_r ~0$ for $k\geq K$ and any $x~{\geq}_r~ A' e_1$.

According to Assumption~\ref{assp:1}.3 and Proposition~\ref{prop:O_inc}, we $T(x,k)~{\leq}_s ~A'e_1$ for $k\leq K-1$ and any $x~{\geq}_r~ A' e_1$.

Thus, we have the proposition.

\subsection{Proof of Proposition~\ref{prop:Tmatrix}}
(1) For the property of $\lambda_1=1$ and $V_1=\frac{1}{\sqrt{X}}\mathbf{1}_X$, it is easily verified, i.e.,
\begin{align*}
\frac{1}{\sqrt{X}} A \cdot \mathbf{1}_X  &
=\frac{1}{\sqrt{X}} \left(
    \begin{array}{c}
      A_1 \cdot \mathbf{1}_X  \\
      A_2 \cdot \mathbf{1}_X  \\
       \vdots  \\
      A_X \cdot \mathbf{1}_X
     \end{array}
\right)
=\frac{1}{\sqrt{X}} \left(
    \begin{array}{c}
      1 \\
      1 \\
      \vdots \\
      1
     \end{array}
\right)= \frac{1}{\sqrt{X}} \mathbf{1}_X .
\end{align*}

(2) For the property of replacing $\lambda_1$ with any value $\lambda$, we have the LHS of~\eqref{eq:matrix}
\begin{align}\label{eq:lam1}
 & \Lambda_1V' (x_1-x_2) \nonumber \\
  =&[\lambda_1V_1(x_1-x_2),~\lambda_2V_2(x_1-x_2),~\cdots,~\lambda_XV_X(x_1-x_2)]' \nonumber\\
  =&[\lambda_1\frac{1}{\sqrt{X}}\mathbf{1}_X(x_1-x_2),~\lambda_2V_2(x_1-x_2),~\cdots,~\lambda_XV_X(x_1-x_2)]' \nonumber\\
  =&[0~~\lambda_2V_2(x_1-x_2),~\cdots,~\lambda_XV_X(x_1-x_2)]'.
\end{align}
For the RHS of~\eqref{eq:matrix}, we have
\begin{align}\label{eq:lam2}
 & \Lambda_2V' (x_1-x_2) \nonumber \\
  =&[\lambda V_1(x_1-x_2),~\lambda_2V_2(x_1-x_2),~\cdots,~\lambda_XV_X(x_1-x_2)]' \nonumber\\
  =&[\lambda_1\frac{1}{\sqrt{X}}\mathbf{1}_X(x_1-x_2),~\lambda_2V_2(x_1-x_2),~\cdots,~\lambda_XV_X(x_1-x_2)]' \nonumber\\
  =&[0~~\lambda_2V_2(x_1-x_2),~\cdots,~\lambda_XV_X(x_1-x_2)]'.
\end{align}
By~\eqref{eq:lam1} and~\eqref{eq:lam2}, we prove the equation~\eqref{eq:matrix}.

\subsection{Proof of Proposition~\ref{prop:eig_sum}}
\begin{align*}
      R' \sum_{i=1}^{\infty}(\beta A')^i (x_1-x_2)
   &= R' \sum_{i=1}^{\infty}(\beta (V^{-1})'\Lambda V')^i (x_1-x_2) \\
   &\overset{(a)}= R' \sum_{i=1}^{\infty}(\beta (V^{-1})'\Lambda_2 V')^i (x_1-x_2) \\
   & = R'(V^{-1})' \sum_{i=1}^{\infty}( \beta\Lambda_2)^i V'(x_1-x_2) \\
   & = R'(V^{-1})' \Upsilon V' (x_1-x_2) \\
   & = R' (V \Upsilon V^{-1})' (x_1-x_2) \\
   & = R' Q' (x_1-x_2),
\end{align*}
where, the equality (a) is due to Proposition~\ref{prop:Tmatrix}.

\subsection{Proof of Proposition~\ref{prop:R_diff}}
 According to Assumption~\ref{assp:1}.5, we have $R' ( e_{j+1}- e_j)\geq R' Q' ( e_{j+1}- e_j)$ ($1\leq j \leq X-1$). Thus, we only need to prove $R' ( e_{i}- e_j)\geq R' Q' ( e_{i}- e_j)$ for any $i>j+1$.
\begin{align*}
 &  R' ( e_{i}- e_j) -R' Q' ( e_{i}- e_j) \\
 &= R' \sum^{i-1}_{k=j}( e_{k+1}- e_k) -R' Q' \sum^{i-1}_{k=j}( e_{k+1}- e_k) \\
 &= \sum^{i-1}_{k=j}\Big[R' ( e_{k+1}- e_k) -R' Q' ( e_{k+1}- e_k)\Big] \geq 0,
\end{align*}
where, the last inequality is from Assumption~\ref{assp:1}.5.

\section{Proof of Lemma~\ref{lemma:bound_p}}
\label{ap:bound_p}

We prove the lemma by backward induction.

For slot $T$, we have
\begin{enumerate}
  \item[1)] For $u'_T=u_T=l$, it holds that $W^{u'}_T(\mathbf{\check{x}}^l_T)-W^{u}_T(\mathbf{x}^l_T)=R'(\check{x}^{(l)}_T-x^{(l)}_T)$;
  \item[2)] For $u'_T\neq l$, $u_T\neq l$ and $u'_T=u_T$, it holds that $W^{u'}_T(\mathbf{\check{x}}^l_T)-W^{u}_T(\mathbf{x}^l_T)=0$;
  \item[3)] For  $u'_T = l$ and $u_T\neq l$ it exists at least one channel $n$ such that $u'_T = n$ and $\check{x}^{(l)}_T {\geq}_s x^{(n)}_T {\geq}_s x^{(l)}_T$. It then holds that $0 \leq W^{u'}_T(\mathbf{\check{x}}^l_T)-W^{u}_T(\mathbf{x}^l_T)\leq R'(\check{x}^{(l)}_T-x^{(n)}_T)$.
\end{enumerate}
Therefore, Lemma~\ref{lemma:bound_p} holds for slot $T$.

Assume that Lemma~\ref{lemma:bound_p} holds for $T-1, \cdots, t+1$, then we prove the lemma for slot $t$.

\textbf{We first prove the first case: $u'_t=l$, $u_t=l$}.
By developing $\mathbf{\check{x}}^l_t$ and $\mathbf{\check{x}}^l_t$ according to Lemma~\ref{lemma:decomposability_sym_ARQ}, we have:
\begin{align}
\digamma(\mathbf{\check{x}}^l_t,u'_t)
=&\sum_{m\in \mathcal{Y}} d(\check{x}^{(l)}_t,m) \sum_{j\in\mathcal{X}} e'_j T(\check{x}^{(l)}_t,m)W^{\hat{u}'}_{t+1}(\mathbf{x}^{(-l)}_{t+1},e_j), \nonumber \\
=& \sum_{m\in \mathcal{Y}} \sum_{j\in\mathcal{X}} e'_j B(m)A'\check{x}^{(l)}_tW^{\hat{u}'}_{t+1}(\mathbf{x}^{(-l)}_{t+1},e_j) \\
\digamma(\mathbf{x}^l_t,u_t)
=& \sum_{m\in \mathcal{Y}} d(x^{(l)}_t,m) \sum_{j\in \mathcal{X}} e'_j T(x^{(l)}_t,m)W^{\hat{u}}_{t+1}(\mathbf{x}^{(-l)}_{t+1},e_j) \nonumber \\
=& \sum_{m\in \mathcal{Y}} \sum_{j\in\mathcal{X}} e'_j B(m)A' x^{(l)}_t W^{\hat{u}}_{t+1}(\mathbf{x}^{(-l)}_{t+1},e_j).
\end{align}
Furthermore, we have
\begin{align}
\label{eq:ih1_gamma_dif}
~&\digamma(\mathbf{\check{x}}^l_t,u'_t)-\digamma(\mathbf{x}^l_t,u_t) \nonumber \\
=& \sum_{m\in \mathcal{Y}} \sum_{j\in\mathcal{X}} \Big[e'_j B(m)A'\check{x}^{(l)}_tW^{\hat{u}'}_{t+1}(\mathbf{x}^{(-l)}_{t+1},e_j) \nonumber \\
&\qquad\qquad -e'_j B(m)A' x^{(l)}_t W^{\hat{u}}_{t+1}(\mathbf{x}^{(-l)}_{t+1},e_j) \Big] \nonumber \\
\overset{(a)}=&\sum_{m\in \mathcal{Y}}\sum_{j\in\mathcal{X}-\{1\}}\Big[e'_j B(m)A'(\check{x}^{(l)}_t-x^{(l)}_t) \nonumber \\
&\qquad\qquad\times\Big( W^{\hat{u}'}_{t+1}(\mathbf{x}^{(-l)}_{t+1},e_j)- W^{\hat{u}}_{t+1}(\mathbf{x}^{(-l)}_{t+1},e_1)\Big)\Big],
\end{align}
where, the equality (a) is due to $x^{(l)}_t(1)=1-\sum_{j\in\mathcal{X}^{(l)}-\{1\}}x^{(l)}_t(j)$.

Next, we analyze the term in the bracket, $W^{\hat{u}'}_{t+1}(\mathbf{x}^{(-l)}_{t+1},e_j)- W^{\hat{u}}_{t+1}(\mathbf{x}^{(-l)}_{t+1},e_1)$, of RHS  of~\eqref{eq:ih1_gamma_dif} through three cases:

Case 1: if $\hat{u}'_{t+1}=l$ and $\hat{u}_{t+1}=l$, according to the induction hypothesis, we have
\begin{align*}
  0 &\leq W^{\hat{u}'}_{t+1}(\mathbf{x}^{(-l)}_{t+1},e_j)- W^{\hat{u}}_{t+1}(\mathbf{x}^{(-l)}_{t+1},e_1)\\
    &\leq  \sum_{i=0}^{T-t-1} R' (\beta A')^i (e_j-e_1).
\end{align*}

Case 2: if $\hat{u}'_{t+1}\neq l$, $\hat{u}_{t+1}\neq l$, and $\hat{u}'_{t+1}=\hat{u}_{t+1}$, according to the induction hypothesis, we have
\begin{align*}
  0 &\leq W^{\hat{u}'}_{t+1}(\mathbf{x}^{(-l)}_{t+1},e_j)- W^{\hat{u}}_{t+1}(\mathbf{x}^{(-l)}_{t+1},e_1) \\
    &\leq  \sum_{i=1}^{T-t-1}R'(\beta A')^i (e_j-e_1) .
\end{align*}

Case 3: if  $\hat{u}'_{t+1}=l$ and $\hat{u}_{t+1}\neq l$, according to the induction hypothesis, we have
\begin{align*}
  0 &\leq W^{\hat{u}'}_{t+1}(\mathbf{x}^{(-l)}_{t+1},e_j)- W^{\hat{u}}_{t+1}(\mathbf{x}^{(-l)}_{t+1},e_1) \\
    &\leq  \sum_{i=0}^{T-t-1} R' (\beta A')^i(e_j-e_1) .
\end{align*}

Combining Case 1--3, we obtain the bounds of $W^{\hat{u}'}_{t+1}(\mathbf{x}^{(-l)}_{t+1},e_j)- W^{\hat{u}}_{t+1}(\mathbf{x}^{(-l)}_{t+1},e_1)$ as follows:
\begin{align*}
  0 &\leq W^{\hat{u}'}_{t+1}(\mathbf{x}^{(-l)}_{t+1},e_j)- W^{\hat{u}}_{t+1}(\mathbf{x}^{(-l)}_{t+1},e_1) \\
  &\leq  \sum_{i=0}^{T-t-1} R' (\beta A')^i(e_j-e_1).
\end{align*}

Therefore, we have
\begin{align*}
   & W^{u'}_t(\mathbf{\check{x}}^l_t)-W^{u}_t(\mathbf{x}^l_t) \\
  =  & R'(\check{x}^{(l)}_t-x^{(l)}_t)  +  \beta \digamma(\mathbf{\check{x}}^l_t,u'_t)-\digamma(\mathbf{x}^l_t,u_t)\\
 = & R'(\check{x}^{(l)}_t-x^{(l)}_t) +\beta  \sum_{m\in \mathcal{Y}}\sum_{j\in\mathcal{X}-\{1\}} \\
 & \Big[e'_j B(m)A'(\check{x}^{(l)}_t-x^{(l)}_t)\Big( W^{\hat{u}'}_{t+1}(\mathbf{x}^{(-l)}_{t+1},e_j)- W^{\hat{u}}_{t+1}(\mathbf{x}^{(-l)}_{t+1},e_1)\Big)\Big]\\
 \leq & R'(\check{x}^{(l)}_t-x^{(l)}_t)+ \beta  \sum_{m\in \mathcal{Y}}\sum_{j\in\mathcal{X}-\{1\}} \\
     &\Big[e'_j B(m)A'(\check{x}^{(l)}_t-x^{(l)}_t)\Big( \sum_{i=0}^{T-t-1} R' (\beta A')^i (e_j-e_1)\Big)\Big] \\
= & \sum_{i=0}^{T-t} R' (\beta A')^i (\check{x}^{(l)}_t-x^{(l)}_t).
\end{align*}

To the end, we complete the proof of the first part, $u'_t=l$ and $u_t=l$, of Lemma~\ref{lemma:bound_p}.

Secondly, \textbf{we prove the second case $u'_t\neq l$, $u_t\neq l$, and $u'_t=u_t$}, which implies that in this case, $u'_t=u_t$. Assuming $u'_t=u_t=k$, we have:
\begin{align}
&\digamma(\mathbf{\check{x}}^l_t,u'_t) \nonumber \\
=&\sum_{m\in \mathcal{Y}} d(x^{(k)}_t,m) \sum_{j\in\mathcal{X}} e'_j T(x^{(k)}_t,m)W^{\hat{u}'}_{t+1}(\mathbf{x}^{(-k,-l)}_{t+1},e_j,A'\check{x}^{(l)}_t) \nonumber \\
=& \sum_{m\in \mathcal{Y}} \sum_{j\in\mathcal{X}} e'_j B(m)A'x^{(k)}_t W^{\hat{u}'}_{t+1}(\mathbf{x}^{(-k,-l)}_{t+1},e_j,A'\check{x}^{(l)}_t) \\
&\digamma(\mathbf{x}^l_t,u_t) \nonumber  \\
=& \sum_{m\in \mathcal{Y}} d(x^{(k)}_t,m) \sum_{j\in \mathcal{X}} e'_j T(x^{(k)}_t,m)W^{\hat{u}}_{t+1}(\mathbf{x}^{(-k,-l)}_{t+1},e_j,A'x^{(l)}_t) \nonumber \\
=& \sum_{m\in \mathcal{Y}} \sum_{j\in\mathcal{X}} e'_j B(m)A' x^{(k)}_t W^{\hat{u}}_{t+1}(\mathbf{x}^{(-k,-l)}_{t+1},e_j,A' x^{(l)}_t).
\end{align}
Thus,
\begin{align}
\label{eq:case2}
~&\digamma(\mathbf{\check{x}}^l_t,u'_t)-\digamma(\mathbf{x}^l_t,u_t) \nonumber \\
=&\sum_{m\in \mathcal{Y}} \sum_{j\in\mathcal{X}} e'_j B(m)A' x^{(k)}_t  \nonumber \\ &\Big[W^{\hat{u}'}_{t+1}(\mathbf{x}^{(-k,-l)}_{t+1},e_j,A'\check{x}^{(l)}_t)-W^{\hat{u}}_{t+1}(\mathbf{x}^{(-k,-l)}_{t+1},e_j,A'x^{(l)}_t) \Big].
\end{align}

For the term in the bracket of RHS of~\eqref{eq:case2}, if $l$ is never chosen for $W^{\hat{u}'}_{t+1}(\mathbf{x}^{(-k,-l)}_{t+1},e_j,A'\check{x}^{(l)}_t)$ and $W^{\hat{u}}_{t+1}(\mathbf{x}^{(-k,-l)}_{t+1},e_j,A'x^{(l)}_t)$ from the slot $t+1$ to the end of time horizon of interest $T$. That is to say, $\hat{u}'_{\tau}\neq l$ and $\hat{u}_{\tau}\neq l$ for $t+1\le \tau \le T$, and further, we have $W^{\hat{u}'}_{t+1}(\mathbf{x}^{(-k,-l)}_{t+1},e_j,A'\check{x}^{(l)}_t)-W^{\hat{u}}_{t+1}(\mathbf{x}^{(-k,-l)}_{t+1},e_j,A'x^{(l)}_t) =0$; otherwise, it exists $t^o$ ($t+1\leq t^o \leq T$) such that one of the following three cases holds.

Case 1: $u'_{\tau}\neq l$ and $u_{\tau}\neq l$ for $t\le \tau \le t^0-1$ while $u'_{t^0} = l$ and $u_{t^0} = l$;

Case 2: $u'_{\tau}\neq l$ and $u_{\tau}\neq l$ for $t\le \tau \le t^0-1$ while $u'_{t^0} \neq l$ and $u_{t^0} = l$ (Note that this case does not exist since $R'[A']^{t^0-t}\check{x}^{(l)}_t \geq  R'[A']^{t^0-t}x^{(l)}_t$ according to the first order stochastic dominance of transition matrix $A$);

Case 3: $u'_{\tau}\neq l$ and $u_{\tau}\neq l$ for $t\le \tau \le t^0-1$ while $u'_{t^0} = l$ and $u_{t^0} \neq l$.

For Case 1, according to the hypothesis ($u'_{t^0} = l$ and $u_{t^0} = l$), we have
\begin{align*}
& \beta^{t_0-t}( W^{\hat{u}'}_{t^0}(\mathbf{\check{x}}^l_{t^0})-W^{\hat{u}}_{t^0}(\mathbf{x}^l_{t^0}) ) \\
&\le \beta^{t_0-t} \sum_{i=0}^{T-t^o}(\beta\overline{\lambda})^i R'(\check{x}^{(l)}_{t^0}- x^{(l)}_{t^0}) \\
&=\beta^{t_0-t}  \sum_{i=0}^{T-t^o} R'(\beta A')^i [A']^{t^0-t}(\check{x}^{(l)}_{t}- x^{(l)}_{t}) \\
&\overset{(b)}\leq \beta \sum_{i=0}^{T-t-1}R'(\beta A')^i A' (\check{x}^{(l)}_{t}- x^{(l)}_{t}),
\end{align*}
where, the inequality (b) is from $t^0\geq t+1$.

For Case 3, by the induction hypothesis, we have the similar results with Case 1.

Combing the results of the three cases, we obtain
\begin{align}
\label{eq:case22}
&W^{\hat{u}'}_{t+1}(\mathbf{x}^{(-k,-l)}_{t+1},e_j,A'\check{x}^{(l)}_t)-W^{\hat{u}}_{t+1}(\mathbf{x}^{(-k,-l)}_{t+1},e_j,A'x^{(l)}_t) \nonumber \\
&\leq \sum_{i=0}^{T-t-1} R'(\beta A')^i A' (\check{x}^{(l)}_{t}- x^{(l)}_{t}).
\end{align}

Combing~\eqref{eq:case22} and~\eqref{eq:case2}, we have
\begin{align*}
   & W^{u'}_t(\mathbf{\check{x}}^l_t)-W^{u}_t(\mathbf{x}^l_t) \\
   & = \beta (\digamma(\mathbf{\check{x}}^l_t,u'_t)-\digamma(\mathbf{x}^l_t,u_t)) \\
   &= \beta\sum_{m\in \mathcal{Y}} \sum_{j\in\mathcal{X}} e'_j B(m)A' x^{(k)}_t  \\ &\times\Big[W^{\hat{u}'}_{t+1}(\mathbf{x}^{(-k,-l)}_{t+1},e_j,A'\check{x}^{(l)}_t)-W^{\hat{u}}_{t+1}(\mathbf{x}^{(-k,-l)}_{t+1},e_j,A'x^{(l)}_t)\Big]\\
   &\leq \beta \sum_{m\in \mathcal{Y}} \sum_{j\in\mathcal{X}} e'_j B(m)A' x^{(k)}_t \sum_{i=0}^{T-t-1} R'(\beta A')^i A' (\check{x}^{(l)}_{t}- x^{(l)}_{t})\\
   &=  \sum_{j\in\mathcal{X}} e'_j\Big[ \sum_{m\in \mathcal{Y}}B(m)\Big]A' x^{(k)}_t \sum_{i=0}^{T-t-1} R'(\beta A')^{i+1} (\check{x}^{(l)}_{t}- x^{(l)}_{t})\\
   &=  \sum_{j\in\mathcal{X}} e'_j \mathbf{E} A' x^{(k)}_t \sum_{i=0}^{T-t-1} R'(\beta A')^{i+1} (\check{x}^{(l)}_{t}- x^{(l)}_{t})\\
   &=   \mathbf{1}'_{X}  A' x^{(k)}_t \sum_{i=0}^{T-t-1} R'(\beta A')^{i+1}(\check{x}^{(l)}_{t}- x^{(l)}_{t})\\
   &=   \mathbf{1}'_{X} x^{(k)}_t \sum_{i=0}^{T-t-1} R'(\beta A')^{i+1}(\check{x}^{(l)}_{t}- x^{(l)}_{t})\\
   &=  \sum_{i=0}^{T-t-1} R'(\beta A')^{i+1}(\check{x}^{(l)}_{t}- x^{(l)}_{t})\\
   &=  \sum_{i=1}^{T-t} R'(\beta A')^{i}(\check{x}^{(l)}_{t}- x^{(l)}_{t}),
\end{align*}
which completes the proof of Lemma~\ref{lemma:bound_p} when $l\notin \mathcal{A}'$ and $l\notin \mathcal{A}$.

Last, \textbf{we prove the third case $u'_t=l$ and $u_t\neq l$}, then it exists at least one process $u_t=n$, and its belief vector denoted as $x^{(n)}_t$, such that $\check{x}^{(l)}_t {\geq}_s x^{(n)}_t {\geq}_s x^{(l)}_t$. We have
\begin{align}
  &W^{u'}_t(\mathbf{\check{x}}^l_t)-W^{u}_t(\mathbf{x}^l_t) \nonumber \\
  =& W^{l}_t(x^{(1)}_t,\cdots,x^{(l-1)}_t,\check{x}^{(l)}_t,x^{(l+1)}_t,\cdots,x^{(N)}_t) \nonumber \\
  &- W^{n}_t(x^{(1)}_t,\cdots,x^{(l-1)}_t,x^{(l)}_t,x^{(l+1)}_t,\cdots,x^{(N)}_t)\nonumber \\
  =& [W^{l}_t(x^{(1)}_t,\cdots,x^{(l-1)}_t,\check{x}^{(l)}_t,x^{(l+1)}_t,\cdots,x^{(N)}_t) \nonumber \\
  &- W^{n}_t(x^{(1)}_t,\cdots,x^{(l-1)}_t,x^{(n)}_t,x^{(l+1)}_t,\cdots,x^{(N)}_t)] \nonumber\\
  +& [W^{n}_t(x^{(1)}_t,\cdots,x^{(l-1)}_t,x^{(n)}_t,x^{(l+1)}_t,\cdots,x^{(N)}_t) \nonumber \\
  &- W^{n}_t(x^{(1)}_t,\cdots,x^{(l-1)}_t,x^{(l)}_t,x^{(l+1)}_t,\cdots,x^{(N)}_t)] \nonumber\\
  =& [W^{l}_t(x^{(1)}_t,\cdots,x^{(l-1)}_t,\check{x}^{(l)}_t,x^{(l+1)}_t,\cdots,x^{(N)}_t) \nonumber \\
  &- W^{l}_t(x^{(1)}_t,\cdots,x^{(l-1)}_t,x^{(n)}_t,x^{(l+1)}_t,\cdots,x^{(N)}_t)] \nonumber \\
  +& [W^{n}_t(x^{(1)}_t,\cdots,x^{(l-1)}_t,x^{(n)}_t,x^{(l+1)}_t,\cdots,x^{(N)}_t) \nonumber \\
  &- W^{n}_t(x^{(1)}_t,\cdots,x^{(l-1)}_t,x^{(l)}_t,x^{(l+1)}_t,\cdots,x^{(N)}_t)].
\label{eq:case3}
\end{align}

According to the induction hypothesis ($l \in \mathcal{A}'$ and $l \in \mathcal{A}$), the first term of the RHS of~\eqref{eq:case3} can be bounded as follows:
\begin{align}
& W^{u'}_t(x^{(1)}_t,\cdots,x^{(l-1)}_t,\check{x}^{(l)}_t,x^{(l+1)}_t,\cdots,x^{(N)}_t) \nonumber \\
&- W^{u}_t(x^{(1)}_t,\cdots,x^{(l-1)}_t,x^{(m)}_t,x^{(l+1)}_t,\cdots,x^{(N)}_t) \nonumber \\
    &\leq \sum_{i=0}^{T-t}R' (\beta A')^i (\check{x}^{(l)}_t-x^{(n)}_t).
\label{eq:case3_1}
\end{align}

Meanwhile, the second term of the RHS of ~\eqref{eq:case3} is inducted by hypothesis ($l\notin \mathcal{A}'$ and $l\notin \mathcal{A}$):
\begin{align}
\label{eq:case3_2}
   &W^{u'}_t(x^{(1)}_t,\cdots,x^{(l-1)}_t,x^{(m)}_t,x^{(l+1)}_t,\cdots,x^{(N)}_t) \nonumber \\
    & - W^{u}_t(x^{(1)}_t,\cdots,x^{(l-1)}_t,x^{(l)}_t,x^{(l+1)}_t,\cdots,x^{(N)}_t) \nonumber \\
    &\leq  \sum_{i=1}^{T-t} R' (\beta A')^i(x^{(n)}_t-x^{(l)}_t).
\end{align}
Therefore, we have, combining~\eqref{eq:case3},~\eqref{eq:case3_1} and ~\eqref{eq:case3_2},
\begin{align*}
 W^{u'}_t(\mathbf{\check{x}}^l_t)-W^{u}_t(\mathbf{x}^l_t)  \leq  \sum_{i=0}^{T-t} R'(\beta A')^i(\check{x}^{(l)}_t-x^{(l)}_t).
 \end{align*}
Thus, we complete the proof of the third part, $l \in \mathcal{\mathcal{A}}'(t)$ and $l\notin \mathcal{A}(t)$, of Lemma~\ref{lemma:bound_p}.

To the end, Lemma~\ref{lemma:bound_p} is concluded.


\begin{thebibliography}{10}

\bibitem{Whittle88}
P.~Whittle.
\newblock Restless bandits: activity allocation in a changing world.
\newblock {\em Journal of Applied Probability}, (Special Vol. 25A):287--298,
  1988.

\bibitem{Gittins2011}
{J. Gittins, K. Glazebrook, and R. Webber}.
\newblock {\em {Multi-Armed Bandit Allocation Indices}}.
\newblock {Blackwell}, Oxford, U.K., {2011}.

\bibitem{Papadimitriou99}
C.~H. Papadimitriou and J.~N. Tsitsiklis.
\newblock The complexity of optimal queueing network control.
\newblock {\em Mathematics of Operations Research}, 24(2):293--305, 1999.

\bibitem{Zhao07}
Q.~Zhao, L.~Tong, A.~Swami, and Y.~Chen.
\newblock Decentralized cognitive mac for opportunistic spectrum access in ad
  hoc networks: A pomdp framework.
\newblock {\em IEEE JSAC}, 25(3), Apr. 2010.

\bibitem{Gittins74}
{J. C. Gittins and D.M. Jones}.
\newblock {A Dynamic Allocation Index For the Sequential Design of
  Experiments}.
\newblock {\em {Progress in Statistics}}, pages 241--266, 1974.

\bibitem{Gittins79}
{J.~C.~Gittins}.
\newblock {Bandit Processes and Dynamic Allocation Indices}.
\newblock {\em {Journal of the Royal Statistical Society}}, 41(2):148--177,
  1979.

\bibitem{Guha07}
S.~Guha and K.~Munagala.
\newblock Approximation algorithms for partial-information based stochastic
  control with markovian rewards.
\newblock In {\em Proc. IEEE Symposium on Foundations of Computer Science
  (FOCS)}, Providence, RI, Oct. 2007.

\bibitem{Guha09}
S.~Guha and K.~Munagala.
\newblock Approximation algorithms for restless bandit problems.
\newblock In {\em Proc. ACM-SIAM Symposium on Discrete Algorithms (SODA)}, New
  York, Jan. 2009.

\bibitem{Bertsimas00}
D.~Bertsimas and J.~E. Nino-Mora.
\newblock Restless bandits, linear programming relaxations, and a primal-dual
  heuristic.
\newblock {\em Operations Research}, 48(1):80--90, 2000.

\bibitem{Qzhao08}
{Q. Zhao, and B. Krishnamachari, and K. Liu}.
\newblock On myopic sensing for multi-channel opportunistic access: Structure,
  optimality, and performance.
\newblock {\em IEEE Transactions Wireless Communication}, 7(3):5413--5440, Dec.
  2008.

\bibitem{Sahmand09}
{S. Ahmand, and M. Liu, and T. Javidi, and Q. zhao and B. Krishnamachari}.
\newblock Optimality of myopic sensing in multichannel opportunistic access.
\newblock {\em IEEE Transactions on Information Theory}, 55(9):4040--4050, Sep.
  2009.

\bibitem{Ahmad09}
S.~Ahmad and M.~Liu.
\newblock Multi-channel opportunistic access: a case of restless bandits with
  multiple plays.
\newblock In {\em Allerton Conference}, Monticello, Il, Spet.-Oct. 2009.

\bibitem{Kliu10}
{K. Liu, and Q. Zhao, and B. Krishnamachari}.
\newblock Dynamic multichannel access with imperfect channel state detection.
\newblock {\em IEEE Transactions on Signal Processing}, 58(5):2795--2807, May
  2010.

\bibitem{Wang11TPS}
K.~Wang and L.~Chen.
\newblock On optimality of myopic policy for restless multi-armed bandit
  problem: An axiomatic approach.
\newblock {\em IEEE Transactions on Signal Processing}, 60(1):300--309, 2012.

\bibitem{Ahmad09b}
T.~Javidi S.~H.~Ahmad, M.~Liu, Q.~Zhao, and B.~Krishnamachari.
\newblock Optimality of myopic sensing in multi-channel opportunistic access.
\newblock {\em IEEE Transactions on Information Theory}, 55(9):4040--4050,
  2009.

\bibitem{ouyang14}
{Yi Ouyang and Demosthenis Teneketzis}.
\newblock On the optimality of myopic sensing in multi-state channels.
\newblock {\em IEEE Transactions on Information Theory}, 60:681--696, Jan.
  2014.

\bibitem{Fabio11icc}
{Fabio E. Lapiccirella, Keqin Liu and Zhi Ding}.
\newblock Multi-channel opportunistic access based on primary arq messages
  overhearing.
\newblock In {\em Proceedings of IEEE ICC 2011}, Kyoto, Jun. 2011.

\bibitem{Wang13TVT}
K.~Wang, Q.~Liu, and Francis~C.M. Lau.
\newblock Multichannel opportunistic access by overhearing primary arq
  messages.
\newblock {\em IEEE Transactions on Vehicular Technology}, 62(7):3486--3492,
  2013.

\bibitem{Wang13JSTSP}
K.~Wang, L.~Chen, and Q.~Liu.
\newblock pportunistic spectrum access by exploiting primary user feedbacks in
  underlay cognitive radio systems: An optimality analysis.
\newblock {\em IEEE Journal of Selected Topics in Signal Processing},
  7(5):869--882, 2013.

\bibitem{Muller2002}
{A. Muller and D. Stoyan}.
\newblock {\em {Comparison Methods for Stochastic Models and Risk}}.
\newblock {Wiley}, {New York}, 2002.

\end{thebibliography}
\end{document}